\newcommand{\cred}{\color{black}}
\newcommand{\cblue}{\color{black}}
\newcommand{\pair}[2]{\left\langle #1 , #2 \right\rangle}
\newcommand{\RR}{\mathbb{R}}
\newcommand{\RRD}{{\mathbb{R}^d}}
\newcommand{\CC}{\mathcal{C}}
\newcommand{\CCA}{\mathcal{C}^{1+\alpha}}
\newtheorem{definition}{Definition}[section]
\newtheorem{theorem}{Theorem}[section]
\newtheorem{lemma}[theorem]{Lemma}
\newtheorem{prop}[theorem]{Proposition}
\theoremstyle{remark}
\newtheorem*{remark}{Remark}
\newcommand{\nc}{\newcommand}
\nc{\PP}{{\cal P}} 
\nc{\V}{{\cal V}} 
\nc{\M}{{\cal M}} 
\nc{\T}{{{\mathbb R}^n}}  %torus
\nc{\D}{{\cal D}} 
\nc{\R}{{\mathbb R}} 
\nc{\N}{{\mathbb N}}
\nc{\Thh}{T_h(\mu_t^h)}
\nc{\Th}{T_h(\mu_t)}
\newcommand{\dd}{{\,\mathrm{d}}}
\newcommand{\NN}{\mathbb{N}}
\newcommand\footnoteref[1]{\protected@xdef\@thefnmark{\ref{#1}}\@footnotemark}
\nc{\weak}{\rightharpoonup}
\nc{\weakstar}{\stackrel{\ast}{\rightharpoonup}} 
\renewcommand{\div}{{\mathrm{div}_x}\,}
\def\vec#1{\boldsymbol{#1}}
\def\dd#1#2{\frac{\d #1}{\d #2}}
\def\b0{\vec{0}}
\renewcommand{\div}{{\rm div}}
\newcommand{\Eehj}{e^{\int_0^t \overline{w}(s, y; h_1)\dd s}}
\newcommand{\Eehd}{e^{\int_0^t \overline{w}(s, y; h_2)\dd s}}
\newcommand{\Eehz}{e^{\int_0^t \overline{w}(s, y; 0)\dd s}}
\newcommand{\phiz}{\varphi(X(t, y; 0))}
\def\XXint#1#2#3{{\setbox0=\hbox{$#1{#2#3}{\int}$ }
\vcenter{\hbox{$#2#3$ }}\kern-.6\wd0}}
\def\dd{{\rm d}}
\title[Measure solution to transport equation, \textnormal{K. \L yczek, k.lyczek@mimuw.edu.pl}]{Differentiability in perturbation parameter of measure solutions to perturbed transport equation}
\author{Piotr Gwiazda$^1$}
\author{Sander C. Hille$^2$}
\author{Kamila \L yczek$^3$}
\author{Agnieszka \'{S}wierczewska-Gwiazda$^3$}
\address{{\it 2010 Mathematics Subject Classification.} \textnormal{Primary: 35Q93; Secondary: 28A33.}}
\address{$^1$Institute of Mathematics, Polish Academy of Sciences, \'Sniadeckich 8, 00-656 Warszawa, Poland}
\email{pgwiazda@mimuw.edu.pl}
\address{$^2$Mathematical Institute, Leiden University
P.O. Box 9512, 2300 RA Leiden, The Netherlands}
\email{shille@math.leidenuniv.nl}
\address{$^3$Institute of Applied Mathematics and Mechanics, University of Warsaw, Banacha 2,\newline 02-097 Warszawa, Poland}
\email{k.lyczek@mimuw.edu.pl}
\email{aswiercz@mimuw.edu.pl}
\begin{document}
\maketitle

\begin{abstract}
We consider a linear perturbation in the velocity field of  the transport equation. We investigate solutions in the space of bounded Radon measures and show that they are differentiable with respect to  the perturbation parameter in a proper  Banach space, which is predual to  the H\"older space $\CCA(\RRD)$.  This result on differentiability  is necessary for application in~optimal control theory, which we also discuss. \end{abstract}

\section{Introduction}

Analysis of perturbations in partial differential equation systems is an important issue. {\it Structured population models} \cite{Carillo:2015, Czochra:2010,Czochra:2010_2,carillo:2012,Rosi:2016},
 dynamics of system \cite{Carillo:2011,Gango:2013,Herty:2014,Fornasier:2018a,Fornasier:2018} and {\it vehicular traffic flow} \cite{Degond:2008,Evers:2016,Goatin:2016,Goatin:2017} were investigated for Lipschitz dependence on initial conditions in space of measures. However, the differentiability (not only Lipschitz dependence) is necessary for the application in optimal control theory or linearised stability. Previous considerations concerning the transport equation in  the space of measures did not allow to analyse the differentiability of~solutions with respect to a perturbation of the system \cite{Ambrosio:2008,Thieme:2003,Piccoli2014}.

In this paper we consider solutions to {\cred a} perturbed transport equation {\cred in the space of bounded Radon measures, denoted by $\mathcal{M}(\RRD)$},  {\cred where the}  perturbation {\cred is linear in the} velocity field.
%The space of positive Radon measures is introduced in biological applications in \cite{Diekmann:1983}.
%We show that this solution is differentiable in appropriate space (predual to $(\CCA(\RRD))^*$). 

Consider the initial value problem for {\cred the} transport equation in conservative form
\begin{equation}\label{def:prob_init}
\left\{ \begin{array}{l}
\partial_t\mu_t + \textrm{div}_x(b\mu_t) =  w \mu_t \quad \textrm{in }(\CC_c^1([0,\infty) \times \mathbb{R}^d))^{\ast}, \\
\mu_{t=0}= \mu_0 \in \mathcal{P}(\mathbb{R}^d), 
\end{array} \right. 
\end{equation}
where the velocity field $\left(t \mapsto b(t, \cdot) \right) \in \CC^0(\left[ 0, +\infty)\right.; \CCA(\RRD))$, the initial condition is a probability measure on $\RRD$ and $w(t,x)\in\CCA([0, \infty) \times \RRD)$. By $(\cdot)^{\ast}$ we denote the topological dual to $(\cdot)$, when the latter is equipped with a suitable locally convex or norm topology; $\mathcal{C}_c^1$ is the space of continuous functions with compact support and $\CCA$ is the space of functions of which first order partial derivatives are H\"older continuous with exponent $\alpha$, where $0 < \alpha \leq 1$.

Existence and uniqueness of solutions to equation (\ref{def:prob_init}) was proved in \cite{MANIGLIA2007601}, see Lemma~\ref{lem:repr_form}. The solution $\mu_t\colon  [0, \infty) \to \mathcal{P}(\mathbb{R}^d)$ is a {\it narrowly continuous} curve (by \cite{MANIGLIA2007601}, Lemma 3.2). Recall  that a mapping $ [0, \infty)\ni t \mapsto \mu_t \in \mathcal{P}(\mathbb{R}^d)$ is  narrowly continuous if 
$t \mapsto \int_{\mathbb{R}^d} \eta \dd \mu_t$ is a~continuous function for all $\eta$ in the space of continuous and bounded functions defined on $\RRD$, $\CC_b(\mathbb{R}^d)$.

We start by defining a weak solution to equation \eqref{def:prob_init}.

\begin{definition}\label{def:weak_solu}
Let $\mu_0 \in \mathcal{P}(\mathbb{R}^d)$ 
 and $\left(t \mapsto b(t, \cdot) \right) \in \CC^0(\left[ 0, +\infty)\right.; \CCA(\RRD))$.\\
 We say that {\cred the} narrowly continuous curve $t\mapsto \mu_t\in \mathcal{P}(\RRD)$ is a weak solution to (\ref{def:prob_init})  if %partial differential distribution% 
\begin{equation}\label{eqn:weak_solu}
\begin{split}
\int_0^{\infty} \int_{\RRD} \left(\partial_t\varphi(t,x) +b\nabla_x \varphi(t,x)\right) \dd \mu_t(x) \dd t  +& \int_{\RRD} \varphi(0,\cdot) \dd \mu_0 \\
&=\int_0^{\infty} \int_{\RRD}w(t,x)\varphi(t,x)\dd \mu_t(x) \dd t,\\
\end{split}
\end{equation}
holds for all test functions $\varphi \in \CC_c^1([0,\infty) \times \RRD)$.
\end{definition}

We introduce a perturbation to the velocity field $b$ as follows
\begin{equation}\label{def:pert}
 b^h(t, x) {\cred \colonequals} b(t, x) + h \cdot b_1(t, x),
\end{equation}
where $\left(t \mapsto b(t, \cdot) \right), \left(t \mapsto b_1(t, \cdot) \right) \in \CC^0(\left[ 0, +\infty)\right.; \CCA(\RRD))$ {\cred and $h\in \mathbb{R}$, close to 0}. 

The perturbed problem corresponding to (\ref{def:prob_init}) has the form
\begin{equation}\label{def:prob_init2}
\left\{ \begin{array}{l}
\partial_t\mu_t^h + \textrm{div}_x\left( b^h( t, x)\mu_t^h \right) = w(t,x)\mu_t^h \quad \textrm{in }(\CC_c^1([0,\infty) \times \RRD))^{\ast}, \\
\mu_{t=0}^h= \mu_0 \in \mathcal{P}(\RRD). 
\end{array} \right. 
\end{equation}

Notice that the initial conditions in (\ref{def:prob_init}) and (\ref{def:prob_init2}) are the same ($\mu_{t=0}= \mu_{t=0}^h= \mu_0$).  For the purpose of further considerations, without loss of generality, we may assume that $h\in [-\frac{1}{2}, \frac{1}{2}]$.

Before stating the main result, we need to define an appropriate Banach space. First recall that the H\"older space $\CCA(\RRD)$ is a Banach space with the norm 
\begin{equation}
\|f\|_{\CCA(\RRD)} \colonequals  \sup_{x\in\RRD}|f(x)| +\ \sup_{x\in\RRD} |\nabla_x f(x)| \ +\
\sup_{\substack{x_1 \neq x_2\\ x_1, x_2 \in \RRD}}
 \frac{|\nabla_x f(x_1) - \nabla_x f(x_2)|}{|x_1-x_2|^\alpha}.
\end{equation}

The space of Radon measures $\mathcal{M}(\RRD)$ inherits the dual norm {\cred  of} $(\CCA(\RRD))^*$ by means of~embedding the former into the latter, where a measure is identified with the functional defined by  integration against the measure. 
 {\cred  Throughout} we identify the former with the subspace of~$(\CCA(\RRD))^*$. Let then
\begin{equation}\label{def:defZ} 
Z\colonequals \overline{{\mathcal{M}}(\RRD)}^{(\CCA(\RRD))^\ast}{\cred ,}
\end{equation}
which is a Banach space equipped with the dual norm $\Vert \cdot \Vert_{(\CCA(\RRD))^*}$.

We show in Proposition \ref{prop:Z_isom} that such defined $Z$ is a predual space of $\CCA(\RRD)$: $Z^*$ is linearly isomorphic to $\CCA(\RRD)$.

The following theorem is the main result of this paper.

\begin{theorem}\label{th:main}
{\cred Assume that} $\left(t \mapsto b(t, \cdot) \right){\textrm and } \left(t \mapsto b_1(t, \cdot) \right)\in \CC^0\big(\left[ 0, +\infty)\right.; \CCA(\RRD)\big)$, and  $w(t,x)\in \CCA([0, \infty)\times \RRD)$.
Let $\mu_t^h$ be {\cred the} weak solution to problem (\ref{def:prob_init2}) with velocity field defined by (\ref{def:pert}). Then the mapping
\[ [-\frac{1}{2}, \frac{1}{2}] \ni h \mapsto \mu_t^{h} \in \mathcal{P}(\RRD)\]
 is differentiable in $Z$, i.e. $
\partial_h \mu_t^{h} \in Z.
$
\end{theorem}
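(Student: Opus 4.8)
The plan is to start from the explicit Lagrangian representation of the weak solution furnished by Lemma~\ref{lem:repr_form}, to differentiate it formally in $h$, and then to prove that the difference quotients converge to the resulting expression in the norm of $(\CCA(\RRD))^*$. Since every difference quotient is a difference of two finite measures, hence an element of $\mathcal{M}(\RRD)\subset Z$, and $Z$ is by construction a closed subspace of $(\CCA(\RRD))^*$, the limit automatically lies in $Z$. Concretely, by Lemma~\ref{lem:repr_form},
\[
\langle\mu_t^h,\varphi\rangle=\int_{\RRD}\varphi\big(X(t,y;h)\big)\,e^{\int_0^t\overline{w}(s,y;h)\,\dd s}\,\dd\mu_0(y),\qquad \overline{w}(s,y;h):=w\big(s,X(s,y;h)\big),
\]
for every $\varphi\in\CC_b(\RRD)$ (and in particular for $\varphi\in\CCA(\RRD)$), where $X(\cdot\,,y;h)$ is the flow of $b^h$, that is $\partial_tX(t,y;h)=b^h\big(t,X(t,y;h)\big)$, $X(0,y;h)=y$.

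Since $b,b_1\in\CC^0([0,\infty);\CCA(\RRD))$, the field $b^h(t,x)=b(t,x)+h\,b_1(t,x)$ is, on each slab $[0,t]\times\RRD$, bounded, globally Lipschitz in $x$ and affine in $h$; by the classical theory of ODEs depending on a parameter, the flow $X(t,y;h)$ exists globally, is Lipschitz in $y$ and of class $\CC^1$ in $h$, and $\partial_hX(t,y;h)$ solves the variational equation
\[
\partial_t\big(\partial_hX\big)=\nabla_xb^h\big(t,X(t,y;h)\big)\,\partial_hX+b_1\big(t,X(t,y;h)\big),\qquad \partial_hX(0,y;h)=0.
\]
Because $b,b_1,\nabla_xb,\nabla_xb_1,w,\nabla_xw$ are globally bounded and $\nabla_xb^h$ is $\alpha$-Hölder, Grönwall's inequality yields, for $s\le t$ and $h,h'\in[-\tfrac12,\tfrac12]$, estimates uniform in $y\in\RRD$:
\[
|\partial_hX(s,y;h)|\le C,\quad \sup_{y}|X(s,y;h')-X(s,y;h)|\le C|h'-h|,\quad \sup_{y}|\partial_hX(s,y;h')-\partial_hX(s,y;h)|\le C|h'-h|^{\alpha},
\]
and, setting $E(t,y;h):=e^{\int_0^t\overline{w}(s,y;h)\,\dd s}$, the corresponding statements for $E$ and for $\partial_hE(t,y;h)=E(t,y;h)\int_0^t\nabla_xw\big(s,X(s,y;h)\big)\cdot\partial_hX(s,y;h)\,\dd s$, with constants depending only on $t$ and on the $\CCA$-norms of $b,b_1,w$. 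The $\alpha$-loss in the last bound is forced by the Hölder — not Lipschitz — regularity of $\nabla_xb^h$; the uniformity in $y$ matters because $\mu_0$ need not be compactly supported.

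The candidate derivative is $\nu_t^h\in(\CCA(\RRD))^*$ given by
\begin{multline*}
\langle\nu_t^h,\varphi\rangle:=\int_{\RRD}\Big[\nabla_x\varphi\big(X(t,y;h)\big)\cdot\partial_hX(t,y;h)\\
{}+\varphi\big(X(t,y;h)\big)\int_0^t\nabla_xw\big(s,X(s,y;h)\big)\cdot\partial_hX(s,y;h)\,\dd s\Big]E(t,y;h)\,\dd\mu_0(y),
\end{multline*}
which by the bounds above and $\mu_0\in\mathcal{P}(\RRD)$ satisfies $|\langle\nu_t^h,\varphi\rangle|\le C\|\varphi\|_{\CCA(\RRD)}$. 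Its second summand is integration against the finite Radon measure $\big(X(t,\cdot\,;h)\big)_{\#}(g\mu_0)$ with $g(y):=E(t,y;h)\int_0^t\nabla_xw\cdot\partial_hX\,\dd s$, so it lies in $\mathcal{M}(\RRD)\subset Z$; its first summand is $\varphi\mapsto\int_{\RRD}\nabla_x\varphi\cdot\dd\vec\lambda$ for the finite vector measure $\vec\lambda:=\big(X(t,\cdot\,;h)\big)_{\#}\!\big(\partial_hX(t,\cdot\,;h)\,E(t,\cdot\,;h)\,\mu_0\big)$, which also belongs to $Z$: mollification gives measures $\vec\lambda*\rho_\varepsilon$ with $\operatorname{div}(\vec\lambda*\rho_\varepsilon)\in L^1(\RRD)\subset\mathcal{M}(\RRD)$ and $\sup_{\|\varphi\|_{\CCA}\le1}\big|\int_{\RRD}\nabla_x\varphi\cdot\dd(\vec\lambda-\vec\lambda*\rho_\varepsilon)\big|\le C\varepsilon^{\alpha}|\vec\lambda|(\RRD)\to0$ — this is precisely where $\CCA$ rather than $\CC^1$ is used (alternatively, membership of $\nu_t^h$ in $Z$ follows a posteriori from closedness of $Z$ and the convergence proved next). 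It remains to show $\big\|\delta^{-1}(\mu_t^{h+\delta}-\mu_t^h)-\nu_t^h\big\|_{(\CCA(\RRD))^*}\to0$ as $\delta\to0$. Abbreviating $X^h:=X(\cdot\,,y;h)$, $E^h:=E(\cdot\,,y;h)$ and using
\[
\varphi(X^{h+\delta})E^{h+\delta}-\varphi(X^h)E^h=\big(\varphi(X^{h+\delta})-\varphi(X^h)\big)E^{h+\delta}+\varphi(X^h)\big(E^{h+\delta}-E^h\big),
\]
the first-order identities $\varphi(X^{h+\delta})-\varphi(X^h)=\int_0^1\nabla_x\varphi\big(X^h+\theta(X^{h+\delta}-X^h)\big)\,\dd\theta\cdot(X^{h+\delta}-X^h)$, $\delta^{-1}(X^{h+\delta}-X^h)=\int_0^1\partial_hX^{h+\theta\delta}\,\dd\theta$, $\delta^{-1}(E^{h+\delta}-E^h)=\int_0^1\partial_hE^{h+\theta\delta}\,\dd\theta$, and the estimates of the previous paragraph — notably $\big|\nabla_x\varphi\big(X^h+\theta(X^{h+\delta}-X^h)\big)-\nabla_x\varphi(X^h)\big|\le[\nabla_x\varphi]_\alpha\,|X^{h+\delta}-X^h|^{\alpha}\le C|\delta|^{\alpha}\|\varphi\|_{\CCA(\RRD)}$ — one bounds the integrand of $\delta^{-1}\langle\mu_t^{h+\delta}-\mu_t^h,\varphi\rangle-\langle\nu_t^h,\varphi\rangle$ pointwise in $y$ by $C|\delta|^{\alpha}\|\varphi\|_{\CCA(\RRD)}$, uniformly in $y$. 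Integrating against $\mu_0$ and taking the supremum over $\|\varphi\|_{\CCA(\RRD)}\le1$ gives $\big\|\delta^{-1}(\mu_t^{h+\delta}-\mu_t^h)-\nu_t^h\big\|_{(\CCA(\RRD))^*}\le C|\delta|^{\alpha}\to0$; by closedness of $Z$ the limit $\partial_h\mu_t^h=\nu_t^h$ lies in $Z$.

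The main obstacle is this last step. One must propagate all the quantitative flow estimates uniformly over the (possibly unbounded) support of $\mu_0$, and — more delicately — absorb the second-order term of the $h$-expansion without a Taylor remainder, since $b,b_1,w$ possess no second derivatives; this is done by trading that term against the $\alpha$-Hölder seminorm of the test function built into $\|\cdot\|_{\CCA(\RRD)}$. This interplay — $\nabla_x\varphi\in\CC^\alpha$ supplying the fractional gain $|\delta|^\alpha$, while the ``gradient of a vector measure'' $\varphi\mapsto\int\nabla_x\varphi\cdot\dd\vec\lambda$ still belongs to the predual $Z$ — is the structural reason the differentiability is formulated in $Z$, the predual of $\CCA(\RRD)$, rather than in a predual of $\CC^1(\RRD)$.
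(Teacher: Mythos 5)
Your proposal is correct and follows essentially the same route as the paper: the representation formula of Lemma~\ref{lem:repr_form}, the $\mathcal{C}^{1+\alpha}$-in-$h$ flow estimates of Lemma~\ref{claim:esti_X} (which you re-derive via Gr\"onwall), the trade-off of the missing second derivatives against the $\alpha$-H\"older seminorm of the test function, and closedness of $Z$ to place the limit. The only differences are refinements rather than a new method: you exhibit the explicit limit $\nu_t^h$ with a quantitative rate $C|\delta|^{\alpha}$ and treat all $h$ at once (plus an optional direct verification that $\nu_t^h\in Z$), whereas the paper proves only the Cauchy property at $h=0$ and reduces general $h$ to that case by absorbing $hb_1$ into the velocity field.
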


Classically, the analysis of structured population models was carried out in Lipschitz setting \cite{Webb:1985,Thieme:2003}. This approach is appropriate for considering the densities of populations. However, it does not allow to work with less regular distributions used in applications, like Dirac mass. Firstly, we would like to argue why this result cannot be obtained in the space $W^{1,\infty}$ with the~{\it flat metric} (called also {\it bounded Lipschitz distance}) -- what is a natural setting to consider transport equation in the space of bounded Radon measures \cite{Piccoli:2016,Piccoli:2017,jabl:2014,Crippa:2013,Czochra:2010_2}. 

Recall that the flat metric is defined as follows
\[\rho_F(\mu, \nu):= \sup_{f \in W^{1,\infty}, \Vert f \Vert_{W^{1, \infty}}\leq 1}\left\lbrace \int_{\RRD} f\dd(\mu-\nu)\right\rbrace.\]
It is worth recalling that the {\it generalized Wasserstein distance} coincides with the flat metric \cite{Piccoli:2016}.

Now, we recall a counterexample presented in \cite{Skrz:2018} for differentiability in the mentioned setting. Consider a perturbed transport equation for one dimensional $x$ on $\RR$
\begin{equation}\label{counter_example}
\left\{ \begin{array}{l}
\partial_t \mu_t^h + \partial_x((1+h)\mu_t^h)=0,\\ \mu_0^h=\delta_0.
\end{array}\right.
\end{equation}
It can be easily checked that $\mu_t^h=\delta_{(1+h)t}$ is a measure solution to (\ref{counter_example}). Note that the map $h\mapsto \mu_t^h$ is Lipschitz continuous
\[\rho_F(\mu_t^h,\mu_t^{h'})=\rho_F(\delta_{(1+h)t}, \delta_{(1+h')t})\leq |h-h'|t.\]
However,  it is not differentiable for the flat metric.
If $\frac{\mu_t^h-\mu_t^0}{h}$ were convergent, it would satisfy Cauchy condition with respect to the flat metric. We compute
\begin{equation*}
\int_{\RR}f(x)\left(\frac{\dd\mu_t^{h_1}(x)-\dd\mu_t^0}{h_1} - \frac{\dd\mu_t^{h_2}(x)-\dd\mu_t^0}{h_2} \right)=\frac{f((1+h_1)t)-f(t)}{h_1}-\frac{f((1+h_2)t)-f(t)}{h_2}.
\end{equation*}

If we choose a test function from $W^{1,\infty}(\RR)$ such that
\[f(x)=
\begin{cases}
|x-t|-1, & \textrm{if } |x-t| \leq 1,\\
0,	& \textrm{if } |x-t|>1,
\end{cases}\]
then for $h_1>0$ and $h_2<0$, we get $p_F\left(\frac{\mu_t^{h_1}-\mu_t^0}{h_1}, \frac{\mu_t^{h_2}-\mu_t^0}{h_2} \right) \geq 2t$.
Thus $\frac{\mu_t^h-\mu_t^0}{h}$ does not converge. 
That is why we need a space with test functions a little bit more regular than $W^{1,\infty}(\RR)$.

Theorem \ref{th:main}, differentiability with respect to perturbing parameter, is required for various applications. One that we shall discuss in this paper is application to~optimal control theory. 

As additional results we have further characterizations of the Banach space $Z$, {\cred p}resented in Section \ref{charZ}. First, $Z$ is separable as the span of Dirac measures at rational points is a~dense {\cred countable} subset of $Z$. Moreover we have {\cred that} $Z^*$  is linearly isomorphic to $\CCA(\RRD)$.

The outline of the paper is as follows. Section \ref{section:prelim} is devoted to preparing the necessary background in functional analysis. The proof of Theorem \ref{th:main} is treated in Section \ref{section:main_proof}. In Section \ref{section:opti} by discuss possible applications of the result of this paper. Characterization of the space $Z$ is presented in Section \ref{charZ}.

%{\bf For this result to hold, we need to develop a functional analysis framework. In particular, the characterization of an appropriate  predual space is a key step in the proof. We shall concentrate on this aspect in section \ref{func_a}. In section \ref{main_proof}  weaker version of Theorem~\ref{th:main} is formulated and proved. Main theorem is natural consequence of this weaker property, which is explained.}
 
\section{Preliminaries}\label{section:prelim}
 
The characteristic system associated to equation \eqref{def:prob_init},  has the following form
\begin{equation}\label{def:diff_init}
\left\{ \begin{array}{l}
{\dot X_b}(t,y)=b\left(t, X_b(t,y)\right),\\
X_b(t_0, y)=y \in \RRD,
\end{array} \right.
\end{equation}
where $\left(t \mapsto b(t, \cdot)\right) \in \CC^0(\left[ 0, +\infty)\right.; \CCA(\RRD))$.

A solution to (\ref{def:diff_init}), $X_b$ is called a {\it flow map}. Note that the flow maps are defined for all $t \in \mathbb{R}$ and thus $y \mapsto X_b(t, y)$ is a one-parameter group of diffeomorphisms on $\RRD$ (dependent on the~variable~$b$). %By $X^{-1}(t,y)$ we mean an inverse mapping to $X(t,y)$.
%Mapping $X_b(t,y)$ has well defined inverse for each $t$. Namely for every $X_b(t, \cdot)$ there is well defined $X^{-1}_b(t, \cdot)$ such that $X_b(t, X^{-1}_b(t, y))=y$ for a.e. $y$.

\begin{remark} The requirement $(t \mapsto b(t, \cdot)) \in \CC^0\left(\left[ 0, +\infty)\right.; \mathcal{C}^1(\RRD)\right)$ is sufficient to conclude that $y \mapsto X_b(t, y)$ is   a diffeomorphism. Higher regularity is needed when we estimate remainder terms of a Taylor expansion in the final proof of Theorem \ref{th:main} ({\cred see e.g. equation (\ref{eqn:holder_need})}). \end{remark}

Now we define the {\it push-forward operator} \cite{Ambrosio:2008}. If $Y_1$, $Y_2$ are separable metric spaces, $\mu \in \mathcal{P}(Y_1)$, and $r\colon  Y_1 \to Y_2$ is a $\mu$-measurable map, we denote by $\mu \mapsto  r\# \mu \in \mathcal{P}(Y_2)$ the~push-forward of $\mu$ through $r$, defined by
\[r\# \mu(\textrm{B})\colonequals  \mu(r^{-1}(\textrm{B})), \qquad \textrm{ for all } \textrm{B}\in \mathcal{B}(Y_2) 
.\]

The following lemma guarantees that a weak solution $\mu_t$ is probability measure.

\begin{lemma}[A representation formula for the non-homogenous continuity equation \cite{MANIGLIA2007601}]\label{lem:repr_form}
 Let $b(t, y)$ be a Borel velocity field in $L^1([0,T]; W^{1, \infty}(\RRD))$, $w(t,x)$ a Borel bounded and locally Lipschitz continuous (w.r.t. the space variable) scalar function and $\mu_0\in\mathcal{P}(\RRD)$. Then there exists a unique $\mu_t$, narrowly continuous family of Borel finite positive measures solving (in the distributional sense) the initial value problem (\ref{def:prob_init}) and it is given by the explicit formula
 \[\mu_t=X_b(t,\cdot)\#( e^{\int_0^t w(s, X_b(s,\cdot))\dd s} \cdot \mu_0), \qquad  \textrm{ for all } t \in[0,T].\]
 
\end{lemma}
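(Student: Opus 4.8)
The plan is to use the explicit representation formula of Lemma~\ref{lem:repr_form} applied to the perturbed problem, which gives
\[
\mu_t^h = X_{b^h}(t,\cdot)\#\bigl(e^{\int_0^t w(s, X_{b^h}(s,\cdot))\dd s}\cdot\mu_0\bigr),
\]
and then to differentiate in $h$ by testing against $\varphi\in\CCA(\RRD)$. Concretely, for $\varphi\in\CCA(\RRD)$ we have
\[
\langle\mu_t^h,\varphi\rangle = \int_{\RRD}\varphi\bigl(X_{b^h}(t,y)\bigr)\,e^{\int_0^t w(s,X_{b^h}(s,y))\dd s}\dd\mu_0(y),
\]
so the whole dependence on $h$ is funneled through the flow map $y\mapsto X_{b^h}(t,y)$ and the exponential weight. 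The first step is therefore to establish differentiability of $h\mapsto X_{b^h}(t,y)$ (uniformly in $y$ on compacts, or rather with bounds controlled by the $\CCA$-norms of $b$ and $b_1$). This follows from classical ODE theory: differentiating the characteristic system $\dot X_{b^h}(t,y)=b^h(t,X_{b^h}(t,y))=b(t,X_{b^h})+h\,b_1(t,X_{b^h})$ with respect to $h$, the derivative $V(t,y)\colonequals\partial_h X_{b^h}(t,y)$ solves the linear variational equation
\[
\dot V(t,y) = \nabla_x b^h\bigl(t,X_{b^h}(t,y)\bigr)\,V(t,y) + b_1\bigl(t,X_{b^h}(t,y)\bigr),\qquad V(t_0,y)=0,
\]
which has a unique global solution precisely because $b,b_1\in\CC^0([0,\infty);\CCA(\RRD))\subset\CC^0([0,\infty);\CC^1(\RRD))$; Grönwall estimates give the needed uniform-in-$y$ bounds on $V$ and on the difference quotients.

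The second step is to assemble the candidate for $\partial_h\mu_t^h$ as a linear functional on $\CCA(\RRD)$. Differentiating under the integral sign,
\[
\partial_h\langle\mu_t^h,\varphi\rangle
= \int_{\RRD}\Bigl[\nabla_x\varphi\bigl(X_{b^h}(t,y)\bigr)\cdot V(t,y)
+ \varphi\bigl(X_{b^h}(t,y)\bigr)\,\partial_h\!\!\int_0^t w(s,X_{b^h}(s,y))\dd s\Bigr]
e^{\int_0^t w(s,X_{b^h}(s,y))\dd s}\dd\mu_0(y),
\]
with $\partial_h\int_0^t w(s,X_{b^h}(s,y))\dd s = \int_0^t \nabla_x w(s,X_{b^h}(s,y))\cdot V(s,y)\dd s$. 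The right-hand side is plainly linear in $\varphi$, and since $\varphi\in\CCA(\RRD)$ controls $\sup|\varphi|$ and $\sup|\nabla_x\varphi|$, while $V$, the weight, and $\nabla_x w$ are uniformly bounded on the relevant set and $\mu_0$ is a probability measure, it defines a bounded linear functional on $\CCA(\RRD)$, i.e.\ an element $L_t^h\in(\CCA(\RRD))^\ast$. One then checks that the difference quotient $h^{-1}(\mu_t^{h+k}-\mu_t^h)$ converges to $L_t^h$ in the norm of $(\CCA(\RRD))^\ast$ as $k\to 0$: this is where the $\CCA$ (rather than merely $\CC^1$) regularity enters, because estimating $\sup_{\|\varphi\|_{\CCA}\le 1}|\langle h^{-1}(\mu_t^{h+k}-\mu_t^h) - L_t^h,\varphi\rangle|$ requires a Taylor expansion of $\varphi\circ X_{b^h}$ with a Hölder-controlled remainder term (cf.\ the remark after Lemma~\ref{lem:repr_form} and equation~(\ref{eqn:holder_need})) — the remainder is $O(|X_{b^{h+k}}-X_{b^h}|^{1+\alpha})=O(k^{1+\alpha})$, uniformly in $y$, which beats the $k$ in the denominator.

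The final step is to verify that the limit $L_t^h$ actually lies in $Z$, not merely in $(\CCA(\RRD))^\ast$. Since $Z$ is by definition the $(\CCA(\RRD))^\ast$-closure of $\M(\RRD)$, and since each difference quotient $h^{-1}(\mu_t^{h+k}-\mu_t^h)$ is a genuine (signed) measure with finite total variation — a difference of bounded positive measures — hence lies in $\M(\RRD)\subset Z$, the norm limit $L_t^h$ lies in the closed subspace $Z$. Thus $\partial_h\mu_t^h=L_t^h\in Z$, which is the assertion. The main obstacle is the second-to-last point: showing the difference quotients converge in the strong dual norm (uniformly over the unit ball of $\CCA$), which forces one to control $\varphi\circ X_{b^{h+k}} - \varphi\circ X_{b^h} - \nabla_x\varphi\circ X_{b^h}\cdot(X_{b^{h+k}}-X_{b^h})$ in sup-norm over all $\CCA$-unit-ball $\varphi$ simultaneously, and similarly the weight term; this is exactly the step that fails in the $W^{1,\infty}$/flat-metric setting (as the counterexample~(\ref{counter_example}) shows) and succeeds here only because of the extra Hölder modulus on $\nabla_x\varphi$. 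One should also record the continuous dependence of $X_{b^h}$, $V$, and the weight on $h$ to conclude, if desired, that $h\mapsto\partial_h\mu_t^h$ is itself continuous into $Z$, though that is not required by the statement.
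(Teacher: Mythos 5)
Your proposal does not prove the statement at hand. The statement is Lemma \ref{lem:repr_form}: existence and uniqueness of a narrowly continuous family of positive measures solving the non-homogeneous continuity equation, together with the explicit push-forward representation $\mu_t=X_b(t,\cdot)\#\bigl(e^{\int_0^t w(s,X_b(s,\cdot))\dd s}\cdot\mu_0\bigr)$. Your very first step is to \emph{invoke} this representation formula (``use the explicit representation formula of Lemma~\ref{lem:repr_form} applied to the perturbed problem''), and everything that follows --- differentiability of $h\mapsto X_{b^h}$, the variational equation for $V=\partial_h X_{b^h}$, the Taylor expansion with H\"older-controlled remainder, the convergence of difference quotients in $(\CCA(\RRD))^*$, and the closedness of $Z$ --- is a proof of Theorem~\ref{th:main}, the paper's main differentiability result, not of Lemma~\ref{lem:repr_form}. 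As an argument for the lemma itself it is circular: the object whose existence, uniqueness and formula are to be established is assumed from the outset.

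A proof of Lemma~\ref{lem:repr_form} would instead require: (i) checking that $\nu_t := X_b(t,\cdot)\#\bigl(e^{\int_0^t w(s,X_b(s,\cdot))\dd s}\cdot\mu_0\bigr)$ is narrowly continuous and satisfies the weak formulation (\ref{eqn:weak_solu}) --- a direct computation differentiating $t\mapsto \varphi(t,X_b(t,y))\,e^{\int_0^t w(s,X_b(s,y))\dd s}$ along characteristics and integrating against $\mu_0$; and (ii) uniqueness, the substantive part, typically obtained by a duality argument (solving the backward transport equation for a dense class of data) or by reduction to the homogeneous continuity equation. The paper itself does not reprove any of this: it imports the lemma wholesale from \cite{MANIGLIA2007601}. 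So the honest comparison is that the paper cites the result while your text proves a different theorem. For what it is worth, as a sketch of the proof of Theorem~\ref{th:main} your plan tracks the paper's Section~\ref{section:main_proof} quite closely (same splitting into the flow-map term and the exponential-weight term, same use of the H\"older remainder of order $|h|^{1+\alpha}$ to beat the $1/h$), but that is not the statement you were asked to prove.
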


%A representation formula for the solution to the continuity homogeneous equation can be found in {\color{black}\cite[Lemma 8.1.6]{Ambrosio_lect}}.

\begin{remark} Since in our case $(t \mapsto b(t, \cdot)) \in \CC^0\left(\left[ 0, +\infty)\right.; \CCA(\RRD)\right)$ then $b$ is globally Lipschitz and thus the solution $X_t$ is global. Also $w(t,x)$ satisfies the assumption in Lemma \ref{lem:repr_form}. Thus we conclude that (\ref{def:prob_init}) has a unique weak solution $t\mapsto \mu_t$, that  is defined for all $t$. \end{remark}
 
Since the representation formula could be generalized for the case when $\mu$ is a non-negative measure $\mathcal{M}^+(\RRD)$ we can also consider non-positive measures as an initial condition.

%{\color{red} Uzupelnic dlaczego push forward w naszej postaci dziala, co z Jakobianem etc.}

\section{Proof of main result -- Theorem \ref{th:main}}\label{section:main_proof}
By definition $Z= \overline{\textrm{span}\lbrace \delta_x\colon  x\in \RRD\rbrace}^{(\CCA(\RRD))^*}$ is a subspace of $(\CCA(\RRD))^*$. The space $Z$ inherits {\cred the} norm of $(\CCA(\RRD))^*$. Since $Z$ is complete, it is enough to show that proper sequence of differential quotient is a Cauchy sequence. 

The analogue of (\ref{def:diff_init}) for the system associated to perturbed equation \eqref{def:prob_init2} with velocity field defined by (\ref{def:pert}), where $\left(t \mapsto b(t, \cdot)\right), \left(t \mapsto b_1(t, \cdot)\right) \in \CC^0\left(\left[ 0, +\infty)\right.; \CCA(\RRD)\right)$, has the form
\begin{equation}\label{def:diff_pert}
\left\{ \begin{array}{l}
{\dot X}_{h}(t,y)=\left(b+b_1 h\right)\left(t, X_{h}(t,y)\right),\\
X_{h}(t_0, y)=y \in \RRD.
\end{array} \right.
\end{equation}

As before, $y \mapsto X_{h}(t, y)$ is a diffeomorphism. To underline the dependence of $X_{h}(t, x)$ on the~parameter $h$ from now on we will use the notation $X(t, y; h)\colonequals X_{h}(t, y)$.

\begin{lemma}{\label{claim:esti_X}}
Let $\left(t \mapsto b(t, \cdot)\right), \left(t \mapsto b_1(t, \cdot)\right) \in \CC^0(\left[ 0, +\infty)\right.; \CCA(\RRD))$. Then for all $(t, y)\in [0,+ \infty) \times \RRD$ the mapping $\left(h \mapsto X (t, y; h)\right) \in \CCA([-\frac{1}{2}, \frac{1}{2}])$. 

\begin{comment}Moreover,  %, $\forall K \subset \RRD$ %(K is compact)
\[ 
\sup\limits_{y\in \RRD} \left\Vert X(t,y; h) \right\Vert_{\CCA([-\frac{1}{2}, \frac{1}{2}])} < +\infty \qquad
{\cred \textrm{ for all } t \in \mathbb{R}^{+}}\]
where
\[\Vert f \Vert_{\CCA([-\frac{1}{2}, \frac{1}{2}])} \colonequals  \sup_{h\in[\frac{1}{2}, \frac{1}{2}]}| f (h)| + \sup_{h\in[\frac{1}{2}, \frac{1}{2}]}\left| \partial_h f \right| + \sup\limits_{\substack{h_1, h_2 \in [-\frac{1}{2}, \frac{1}{2}]\\ h_1 \neq h_2}} \frac{\Big| \partial_h f}|_{h=h_1}-\partial_h f|_{h=h_2}\Big|}{|h_1-h_2|^{\alpha}}.\]
%If in addition $b_1$ has compact support (which means that $\{ x|b_1(x) \neq 0 \}$ is compact) then
%\[
%\sup\limits_{x\in \RRD} \Vert X_{b^h}^{-1}(t,x)\Vert_{\mathrm{C}^{0}[-1,1]} < + \infty.
%\]
\end{comment}
\end{lemma}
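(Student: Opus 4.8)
The plan is to analyze the ODE \eqref{def:diff_pert} as a family indexed by the parameter $h$ and to transfer regularity in $h$ from the right-hand side to the flow. First I would fix $(t,y)$ and regard $h \mapsto X(t,y;h)$ as the value at time $t$ of the solution to an ODE whose vector field $(b + h b_1)(s,\cdot)$ depends affinely — hence smoothly — on $h$, with all fields lying in $\CCA(\RRD) \subset \CC^1(\RRD)$ uniformly on compact time intervals. The classical theory of differentiable dependence on parameters (e.g. via Gronwall estimates applied to the integral formulation $X(t,y;h) = y + \int_{t_0}^t (b + h b_1)(s, X(s,y;h))\,\dd s$) gives that $h \mapsto X(t,y;h)$ is $\CC^1$ on $[-\tfrac12,\tfrac12]$, with $\partial_h X$ solving the associated variational equation
\[
\partial_s \big(\partial_h X(s,y;h)\big) = \nabla_x b\big(s,X(s,y;h)\big)\,\partial_h X(s,y;h) + h\,\nabla_x b_1\big(s,X(s,y;h)\big)\,\partial_h X(s,y;h) + b_1\big(s,X(s,y;h)\big),
\]
with initial value $\partial_h X(t_0,y;h) = 0$.

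The substance of the lemma is the extra $\alpha$-H\"older regularity of $\partial_h X$ in $h$. For this I would take two parameters $h_1, h_2 \in [-\tfrac12,\tfrac12]$ and estimate the difference $\partial_h X(t,y;h_1) - \partial_h X(t,y;h_2)$ using the variational equation above. Subtracting the two variational equations and using Gronwall, one is led to bound (i) $|X(s,y;h_1) - X(s,y;h_2)|$, which is $O(|h_1 - h_2|)$ by a direct Gronwall argument on \eqref{def:diff_pert} itself, and (ii) differences of the form $|\nabla_x b(s, X(s,y;h_1)) - \nabla_x b(s, X(s,y;h_2))|$, where the $\CCA$-regularity of $b$ and $b_1$ converts the $O(|h_1-h_2|)$ displacement bound into an $O(|h_1-h_2|^\alpha)$ bound (here $\alpha \le 1$, so $|h_1-h_2| \le |h_1-h_2|^\alpha$ on a bounded set, and the genuinely H\"older pieces are controlled by the H\"older seminorm of $\nabla_x b$). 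Combining these through Gronwall's inequality on the time interval $[t_0, t]$ yields
\[
\big|\partial_h X(t,y;h_1) - \partial_h X(t,y;h_2)\big| \le C(t)\,|h_1 - h_2|^\alpha,
\]
which together with the already-established $\CC^1$ dependence proves $h \mapsto X(t,y;h) \in \CCA([-\tfrac12,\tfrac12])$.

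The main obstacle I anticipate is bookkeeping the H\"older estimate cleanly: one must track how the $\CCA(\RRD)$ norms of $b$ and $b_1$ (which are finite and, by the $\CC^0$-in-time hypothesis, bounded on the compact interval $[0,t]$) enter the Gronwall constant $C(t)$, and in particular verify that the composition $s \mapsto \nabla_x b(s, X(s,y;h))$ inherits a uniform-in-$y$ Lipschitz-type control from the Lipschitz (indeed $\CC^1$) bound on $b$ plus the exponential-in-$t$ Lipschitz bound on $y \mapsto X(s,y;h)$. None of the individual steps is deep — it is the standard smooth/H\"older dependence of ODE flows on parameters — but assembling the three nested Gronwall arguments (for $X$ itself, for $\partial_h X$, and for the $h$-difference of $\partial_h X$) in the right order, and being careful that the displacement estimate feeding into the $\CCA$ bounds of $b, b_1$ is genuinely $O(|h_1-h_2|)$, is where the care is required. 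The commented-out uniform bound $\sup_{y} \|X(t,y;h)\|_{\CCA}$ would follow from the same estimates since the constant $C(t)$ produced does not depend on $y$; I would keep that observation available for the later use in the Taylor expansion referenced around \eqref{eqn:holder_need}.
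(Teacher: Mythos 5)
Your plan is correct and follows essentially the same route as the paper's argument (which invokes the classical parameter-dependence theory of Hartman and, in its detailed version, runs exactly your nested Gronwall scheme: the variational equation for $\partial_h X$ with zero initial data, the bound $|X(t,y;h_1)-X(t,y;h_2)|=O(|h_1-h_2|)$, and the H\"older seminorm of $\nabla_x b$, $\nabla_x b_1$ converting that displacement into the $O(|h_1-h_2|^\alpha)$ bound for $\partial_h X$). Nothing essential is missing.
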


The proof goes in a similar way as the proof of higher order differentiability ($\mathcal{C}^k$, where $k\in\mathbb{N}$) of the solution with respect to parameters, which can be found in the book \cite{ODE:1982} p. 100.

We are in the position to prove the main result.

\begin{proof}[Proof of Theorem \ref{th:main}] 
Consider the weak solution $\mu_t$ to system (\ref{def:prob_init}) (where $h=0$) and $\mu_t^{h_1}$, $\mu_t^{h_2}$ ($h_1\neq h_2$, $h_{1,2}\neq 0$) to system defined by (\ref{def:prob_init2}).  They are unique and defined for all $t\in [0, \infty)$, according to Lemma \ref{lem:repr_form}.

Notice that for every $\lambda\in\mathbb{R}$, $\frac{\mu_t^{h+\lambda}-\mu_t^h}{ \lambda}\in \mathcal{M}(\RRD) \subseteq Z$, which is a complete space. 
%Therefore it suffices to show that for any sequence $h_n\to 0$, $\frac{\mu_t^{h_n}-\mu_t}{h_n}$ is a Cauchy sequence in $(\CCA(\RRD))^*$. %Using push forward formula \eqref{push_fin} we compute  for each $t\in \mathbb{R}^+$
 First we show differentiability at $h=0$. Differentiability at other $h$ follows from this result (see end of proof).
 
 For the first part it suffices to show that
 \[I_{{\cblue{h_1},{h_2}}} \colonequals
\left\Vert  \frac{\mu_t^{h_1}-\mu_t}{h_1}\right. - \left.\frac{\mu_t^{h_2}-\mu_t}{h_2}\right\Vert_{(\CCA (\RRD))^*}\]
 can be made arbitrary small, when $h_1$ and $h_2$ are sufficiently close to 0. Then for any sequence $h_n\to 0$, $\frac{\mu_t^{h_n}-\mu_t}{h_n}$ is a Cauchy sequence in $(\CCA(\RRD))^*$. Hence, converges to a limit that is the same for each sequence $(h_n)$ such that $h_n\to 0$.
\begin{equation}\label{eqn:final_Cauchy}
\begin{split}
{\cblue I_{{\cblue{h_1},{h_2}}}}& =
\sup_{\Vert \psi \Vert_{\CCA} \leq 1} \left|\int_{\RRD} \psi d\left( \frac{\mu_t^{h_1}-\mu_t}{h_1} - \frac{\mu_t^{h_2}-\mu_t}{h_2} \right)\right|
\\
&=
\sup_{\Vert \psi \Vert_{\CCA} \leq 1} \left| \int_{\RRD} \psi \frac{\dd \mu_t^{h_1}}{h_1} - \int_{\RRD} \psi \frac{\dd \mu_t}{h_1} - \int_{\RRD} \psi \frac{\dd \mu_t^{h_2}}{h_2} + \int_{\RRD} \psi \frac{\dd \mu_t}{h_2}\right|
\end{split}
\end{equation}

First we use representation formula (Lemma \ref{lem:repr_form}) and the fact that $y \mapsto X(t, y; h)$ is a diffeomorphism. Introduce for convenience $\overline{w}(s, y; h) \colonequals w(s, X_b(s, y; h))$.
\begin{equation*}
\begin{split}
I_{{\cblue{h_1},{h_2}}}=&\sup_{\Vert \psi \Vert_{\CCA}\leq 1}
\left|
\int_{\RRD} \psi(X(t,y; h_1)) \Eehj \frac{\dd \mu_0}{h_1}
-
\int_{\RRD} \psi(X(t,y; 0))\Eehz\frac{\dd \mu_0}{h_1}\right.\\
& \qquad \qquad 
-\left.
\int_{\RRD} \psi(X(t,y;h_2))\Eehd\frac{\dd \mu_0}{h_2}
+
\int_{\RRD} \psi(X(t, y; 0))\Eehz\frac{\dd \mu_0}{h_2}
\right|\\
&= \sup_{\Vert \psi \Vert_{\CCA}\leq 1} \underbrace{\Bigg|\int_{\RRD} \left(\psi(X(t,y; h_1))-\psi(X(t,y; 0))\right)\Eehj \frac{\dd \mu_0}{h_1}}_{{\cblue I_{h_1}^{(1)}}}\\
&\qquad \qquad - \underbrace{\int_{\RRD} \left(\psi(X(t,y; h_2))-\psi(X(t,y; 0))\right)\Eehd \frac{\dd \mu_0}{h_2}}_{{\cblue I_{h_2}^{(1)}}}\\
&\qquad \qquad - \underbrace{\int_{\RRD} \left( \Eehz -\Eehj\right)\psi(X(t,y; 0)) \frac{\dd \mu_0}{h_1}}_{{\cblue I_{h_1}^{(2)}}}
\\
&\qquad \qquad + \underbrace{\int_{\RRD} \left( \Eehz -\Eehd\right)\psi(X(t,y; 0)) \frac{\dd \mu_0}{h_2}\Bigg|}_{{\cblue I_{h_2}^{(2)}}} 
\end{split}
\end{equation*}

Let us consider $|{\cblue I_{h_1}^{(1)}}-{\cblue I_{h_2}^{(1)}}|$ and $|{\cblue I_{h_1}^{(2)}}-{\cblue I_{h_2}^{(2)}}|$ separately.

In ${\cblue I_{h_1}^{(2)}}-{\cblue I_{h_2}^{(2)}}$ expand $\Eehj$ and $\Eehd$ into Taylor series around $h=0$
\begin{equation}\label{eqn:holder_need}
\begin{split}
|{\cblue I_{h_1}^{(2)}}-&{\cblue I_{h_2}^{(2)}}| = \Bigg|\int_{\RRD} \psi(X(t,y; 0))\Big[\Eehz-\Eehz\\
&\qquad \qquad -h_1\Eehz \partial_h\Big(\int_0^t \overline{w}(s, y;h) \dd s\Big)\Big|_{h=0}-\mathcal{O}(|h_1|^{1+\alpha})\Big]\frac{\dd \mu_0}{h_1}
\\
&\qquad-\int_{\RRD} \psi(X(t,y; 0))\Big[\Eehz-\Eehz
\\
&\qquad \qquad - h_2\Eehz \partial_h\Big(\int_0^t \overline{w}(s, y;h) \dd s\Big)\Big|_{h=0}-\mathcal{O}(|h_2|^{1+\alpha})\Big]\frac{\dd \mu_0}{h_2}\Bigg|
\\
=&\Bigg|
\psi(X(t,y; 0))\int_{\RRD} \Big[
-\Eehz \partial_h\Big(\int_0^t \overline{w}(s, y;h) \dd s\Big)\Big|_{h=0}\\
& + 
\Eehz \partial_h \Big(\int_0^t \overline{w}(s, y;h) \dd s\Big)\Big|_{h=0}-\frac{\mathcal{O}(|h_1|^{1+\alpha})}{h_1}+ \frac{\mathcal{O}(|h_2|^{1+\alpha})}{h_2}\Big]\dd \mu_0\Bigg|
\end{split}
\end{equation}
\begin{equation*}
\begin{split}
\leq& \Bigg|\psi(X(t,y; 0))\int_{\RRD} \left(-\frac{\mathcal{O}(|h_1|^{1+\alpha})}{h_1}+ \frac{\mathcal{O}(|h_2|^{1+\alpha})}{h_2}\right)\dd \mu_0\Bigg|\\
 \leq & \Big|c\int_{\RRD} \left(-\frac{\mathcal{O}(|h_1|^{1+\alpha})}{h_1}+ \frac{\mathcal{O}(|h_2|^{1+\alpha})}{h_2}\right)\dd \mu_0\Big|.
\end{split}
\end{equation*}

%Here we can see why the regularity $\CCA$, where $\alpha>0$, is needed.

We now take into consideration $|{\cblue I_{h_1}^{(1)}}-{\cblue I_{h_2}^{(1)}}|$. Because $\psi\in\CCA(\RRD)$, one has
\begin{equation}\label{taylor_ex}
\psi(x)=\psi(x_0)+\nabla_x \psi(x_0)(x-x_0) + R(x,x_0),\tag{$\star$}
\end{equation}
with $|R(x, x_0)| \leq C|\nabla_x\psi|_\alpha \Vert x-x_0\Vert^{1+\alpha}$, where $|\nabla_x \psi|_\alpha$ is an $\alpha$-H\"older constant. Thus, expand $\psi(X(t,y; h_1))
$ and $\psi(X(t,y; h_2))$ into Taylor series around $X(t,y; 0)$
\begin{equation*}
\begin{split}
|I_{h_1}^{(1)}-I_{h_2}^{(1)}|&=
\Big|\int_{\RRD} \left[ \psi(X(t,y; 0))
+
\nabla_x\psi(X(t, y; h))|_{h=0} \cdot (X(t, y; h_1) - X(t, y; 0))\right. \\
& \qquad \qquad  + \left. \mathcal{O}\left(\left|X(t, y; h_1)-X(t, y; 0)\right|^{1+\alpha}\right)
- \psi(X(t, y; 0))\right] \Eehj\frac{\dd \mu_0}{h_1}
\\
& \qquad - 
\int_{\RRD}  \left[ \psi(X(t, y;0))
+
\nabla_x\psi(X(t, y; h))|_{h=0}(X(t, y; h_2) - X(t, y; 0))\right.\\
& \qquad \qquad  + \left. \mathcal{O}\left(\left|X(t, y; h_2)-X(t, y; 0)\right|^{1+\alpha}\right)
- \phiz\right]\Eehd \frac{\dd \mu_0}{h_2}\Big|.
\end{split}
\end{equation*}
Expanding $X(t, y; h_2)$ and $X(t, y; h_1)$ around $h=0$, by Lemma \ref{claim:esti_X} and expansion similar to (\ref{taylor_ex}) for $h \mapsto X(t,y;h)$ we obtain
\begin{equation*}
\begin{split}
|{\cblue I_{h_1}^{(1)}}-&{\cblue I_{h_2}^{(1)}}|\\
=&\Bigg|
\int_{\RRD} \Big[
\nabla_x\psi(X(t,y; h))|_{h=0}\Big(X(t,y; 0)+ h_1\partial_h X(t, y;h)\Big|_{h=0} + \mathcal{O}(|h_1|^{1+\alpha}) - X(t,y; 0)\Big)\\
&\qquad \qquad
 +
 \mathcal{O}\Big(\left|X(t,y; h_1)-X(t,y; 0)\right|^{1+\alpha}\Big)\Big] \Eehj\frac{\dd \mu_0}{h_1}\\
&-\int_{\RRD} \Big[
\nabla_x\psi(X(t,y; h))|_{h=0}\Big(X(t,y; 0)+ h_2\partial_h X(t, y;h)\Big|_{h=0} + \mathcal{O}(|h_2|^{1+\alpha})- X(t,y; 0)\Big)\\
&\qquad \qquad
+
 \mathcal{O}\Big(\left|X(t,y; h_2)-X(t,y; 0)\right|^{1+\alpha}\Big)\Big] \Eehd\frac{\dd \mu_0}{h_2}\Bigg|.
\end{split}
\end{equation*}
Since the remainder term $\mathcal{O} \Big(\left|X(t, y; h)-X(t, y; 0)\right|^{1+\alpha}\Big) \leq c|h|^{1+\alpha}$ for all $h\in [-\frac{1}{2}, \frac{1}{2}]$, we can further estimate
\begin{equation*}
\begin{split}
|{\cblue I_{h_1}^{(1)}}-&{\cblue I_{h_2}^{(1)}}|\\
\leq &\Big|\int_{\RRD} \left[
\nabla_x\psi(X(t,y; h))|_{h=0}\Big(\partial_h X(t, y;h)\Big|_{h=0}+\mathcal{O}(|h_1|^\alpha)\Big)+ \mathcal{O}(|h_1|^\alpha)
\right]\cdot \Eehj \dd \mu_0\\
&
-\int_{\RRD} \left[
\nabla_x\psi(X(t,y; h))|_{h=0}\Big(\partial_h X(t, y;h)\Big|_{h=0}+\mathcal{O}(|h_2|^\alpha)\Big)+ \mathcal{O}(|h_2|^\alpha)
\right]\cdot \Eehd \dd \mu_0\Big|.
\end{split}
\end{equation*}

We consider function $\psi\in \CCA$, with $\Vert \psi \Vert_{\CCA}\leq 1$. Hence we can further estimate\\ $\nabla_x\psi(X(t,y; h))|_{h=0}\leq 1$. This yields
\begin{equation*}
\begin{split}
|{\cblue I_{h_1}^{(1)}}-{\cblue I_{h_2}^{(1)}}| \leq& \Big|\int_{\RRD}\Big[\partial_h X(t, y;h)\Big|_{h=0}\left( \Eehj-\Eehd \right) \\
&\qquad \qquad + \left( \mathcal{O}_1(|h_1|^\alpha)\Eehj -\mathcal{O}_1(|h_2|^\alpha)\Eehd \right)
\Big] \dd \mu_0\Big|.
\end{split}
\end{equation*}
To summarize estimations of $|{\cblue I_{h_1}^{(1)}}-{\cblue I_{h_2}^{(1)}}|$:
\begin{itemize}
\item $\partial_h X(t, y;h)\Big|_{h=0}$ is just finite number (Lemma \ref{claim:esti_X}),
\item $\left( \Eehj-\Eehd \right)$ can be estimated by $c|h_1-h_2|$ (argumentation is similar as in estimations of $|{\cblue I_{h_1}^{(2)}}-{\cblue I_{h_2}^{(2)}}|$),
\item $\left( \mathcal{O}(|h_1|^\alpha)\Eehj -\mathcal{O}(|h_2|^\alpha)\Eehd \right)$ is going to zero when $h_1\to 0$ and $h_2 \to 0$.

\end{itemize}

{\cblue Thus $I_{{h_1},{h_2}}$ can be made arbitrarily small when $h_1$ and $h_2$ are sufficiently close to 0.}
Therefore we have shown that $\frac{\mu_t^{h+\lambda_n}-\mu_t^h}{\lambda_n}$ is a Cauchy sequence for every $\lambda_n\to 0$ in $(\CCA(\RRD))^*$ for $h=0$, {\cblue with the same limit. Hence} 
$\mu_t^h$ is differentiable with respect to parameter $h$ at $h=0$.

The same argumentation works for $h\neq 0$. Let us consider a sequence $\frac{\mu_t^{h+\lambda_n}-\mu_t^h}{\lambda_n}$, where $\lambda_n \to 0$ and $h\neq 0$. 
 By definition of perturbation (\ref{def:pert}), i.e. $b^h\colonequals b+hb_1$, the solution $\mu_t^{h+\lambda_n}$ for velocity field
$$b^{h+\lambda_n}=b+hb_1+\lambda_nb_1 =: \overline{b} +\lambda_n b_1$$
and initial condition $\mu_0^{h+\lambda_n}=\mu_0$ is equal (by Lemma \ref{lem:repr_form}) to the solution $\overline{\mu}_t^{\lambda_n}$ with velocity field ${\cblue \overline{b}}+\lambda_n b_1$ and initial condition $\mu_0$. A similar statement holds for the $\mu_t^h$ and the solution $\overline{\mu}_t^0$ of (\ref{def:prob_init}) with velocity field $\overline{b}$. Thus
\[\frac{\mu_t^{h+\lambda_n}-\mu_t^h}{\lambda_n}=\frac{\overline{\mu}_t^{\lambda_n}-\mu_0}{\lambda_n}\]
and the latter sequence converges in $Z$ as $h\to \infty$, by the first {\cblue part} of {\cblue the} proof.
\end{proof}

\section{Application to Optimal Control}\label{section:opti}
The results discussed above can be applied in optimal control theory.  The list of references on optimal problems concerning transport equation is steadily growing \cite{Gango:2013,Bongini:2017,Fornaster:2017,Degong:2017,Herty:2018a,Bonnet:2018a}.
%Differentiability of solutions with respect to parameters is necessary in the {\it method of steepest descent} or in some other gradient methods of optimization (like Newtonian).

There are two main approaches to solve optimal control problem when the solution is not differentiable with respect to the control parameter. The first one is just to use {\it non-smooth analysis}. The second one is to strengthen assumptions
for the problem to
provide the solution will be differentiable
, and then use {\it smooth analysis} -- for which there are developed  significantly more tools, and which are less numerically complex than non-smooth methods.

In \cite{Bonnet:2018a} the authors consider the following optimal control problem
\begin{equation}\label{prob:Bonnet_opt}
\left\{ \begin{array}{l}
\max_{u\in\mathcal{U}} \left[\int_0^T L(\mu_t, u(t,\cdot))\dd t + \psi(\mu_T) \right],
\\
\\
\left\{ \begin{array}{l}
\partial_t \mu_t + \div_x\Big((v(t,\mu_t,\cdot) +u(t,\cdot))\mu_t\Big)=0,\\ \mu_{t=0}=\mu_0\in\mathcal{P}_c(\RRD),
\end{array}\right.
\end{array}\right.
\end{equation}
where $\mathcal{P}_c(\RRD)$ is the subset of $\mathcal{P}(\RRD)$ of Borel probability measures with compact support.
The function $L$ can be interpreted as income dependent on the level of sales (which is described by measure $\mu_t$) and a situation on the market, $u(t)$ . In this optimal control problem, we want to maximize the total income in the period $[0,T]$. Function $\psi(\mu_T)$ describes the income in a terminal time $T$.

Notice that the period of time is finite and $v(t,\mu_t,x) +u(t,x)$ 
corresponds to the velocity field $b(t,x)$ in our transport equation. Contrary to the problem~(\ref{def:prob_init}) the authors consider the term $v(t,\mu_t,x)$ which depends on the solution. The studies on such non-linear problems will appear in \cite{lycz:2019}.
Nevertheless, briefly speaking, the assumptions for coefficients of~(\ref{prob:Bonnet_opt}) are weaker than the ones for~(\ref{def:prob_init}). In particular the velocity field $v(t,\mu_t,x) +u(t,x)$ is not differentiable with respect to perturbation in $u(t,x)$, it just satisfies Lipschitz condition. 

The authors formulated a new Pontryagin Maximum Principle in the language of subdifferential calculus in Wasserstein spaces.

Below we would like to present the second approach. We want to argue how differentiability of velocity field with respect to perturbing parameter in problem~(\ref{def:prob_init2}) can be applied in optimal control.

{\cred In control theory, the control is based on observation of the state of the system at each or some finite points: $u(t)\colonequals \phi(\mu_t^h)$. The state $\mu_t^h$ is in $\mathcal{M}(\RRD)\subset Z$. Thus, a reasonable class of differentiable observation function $\phi$ is provided by the composition of a continuous linear functional on $Z$ and $f\in \mathcal{C}^1(\mathbb{R}, \mathbb{R})$. In Proposition \ref{prop:Z_isom} we show that every continuous linear functional on $Z$ is represented essentially by integration with respect to a $\CCA(\RRD)$-function -- denote it here by $K$.}

{\cred Thus,} aiming at optimal control of the solution to (\ref{def:prob_init2}), where $h$ is a control parameter attaining values in $\mathbb{R}$, {\cred we start by considering functionals of the form}
\begin{equation}\label{def:func_opti}
\gamma(\mu^h)\colonequals \widehat{\gamma} \left(\int_\RRD K(x)\dd \mu^h(x)\right),
\end{equation}
where $\widehat{\gamma}$ is a $\mathcal{C}^1$-function {\cred and $K\in \CCA(\RRD)$}.

{\cred The meaning is essentially the following:} the integral operator $\int_{\RRD} K(x)\dd \mu(x)$ is well-defined for $\mu$ being a measure and necessary not every element from the space $Z$ is measure. Following lemma provides extension of the domain to whole space $Z$.

\begin{lemma}[Extension Theorem]\cite[Theorem 2.1]{amman_escher}
Suppose $X$ and $Y$ are metric spaces, and $Y$ is
complete. Also suppose $X_1$ is a dense subset of $X$, and $f\colon  X_1 \to Y$ is uniformly
continuous. Then $f$ has a uniquely determined extension $\overline{f}\colon  X \to Y$ given by
\[ \overline{f}(x) = \lim_{x_1\to x, x_1\in X_1} f(x_1), \qquad {\textrm{for }} x \in X,\]
and $\overline{f}$ is also uniformly continuous.
\end{lemma}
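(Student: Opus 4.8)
The plan is to prove this by the standard density-plus-completeness argument. First I would fix $x \in X$ and, using density of $X_1$, choose a sequence $(x_n)$ in $X_1$ with $x_n \to x$. The first claim is that $(f(x_n))$ is Cauchy in $Y$: given $\varepsilon > 0$, uniform continuity of $f$ supplies $\delta > 0$ with $d_X(a,b) < \delta \Rightarrow d_Y(f(a),f(b)) < \varepsilon$; since $(x_n)$ converges it is Cauchy, so $d_X(x_n,x_m) < \delta$ for all large $n,m$, and hence $d_Y(f(x_n),f(x_m)) < \varepsilon$. By completeness of $Y$ the sequence $(f(x_n))$ converges, and I would define $\overline{f}(x)$ to be its limit.

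Next I would check that $\overline{f}(x)$ is well defined, i.e. independent of the approximating sequence. If $(x_n)$ and $(x_n')$ both lie in $X_1$ and converge to $x$, the interleaved sequence $x_1, x_1', x_2, x_2', \dots$ also converges to $x$, so by the previous step $f$ evaluated along it converges; consequently its two subsequences $(f(x_n))$ and $(f(x_n'))$ have the same limit. Taking in particular the constant sequence $x_n \equiv x$ when $x \in X_1$ shows $\overline{f}(x) = f(x)$, so $\overline{f}$ genuinely extends $f$, and by construction $\overline{f}(x) = \lim_{x_1 \to x,\, x_1 \in X_1} f(x_1)$ as asserted.

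Then I would establish uniform continuity of $\overline{f}$. Given $\varepsilon > 0$, pick $\delta_0 > 0$ with $d_X(a,b) < \delta_0 \Rightarrow d_Y(f(a),f(b)) \le \varepsilon/2$, and set $\delta := \delta_0/3$. For $x, x' \in X$ with $d_X(x,x') < \delta$, choose $x_n \to x$ and $x_n' \to x'$ in $X_1$; for $n$ large enough $d_X(x_n,x) < \delta$ and $d_X(x_n',x') < \delta$, so $d_X(x_n,x_n') \le d_X(x_n,x) + d_X(x,x') + d_X(x',x_n') < 3\delta = \delta_0$, whence $d_Y(f(x_n),f(x_n')) \le \varepsilon/2$. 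Letting $n \to \infty$ and using continuity of the metric, $d_Y(\overline{f}(x),\overline{f}(x')) \le \varepsilon/2 < \varepsilon$, which gives uniform continuity of $\overline{f}$.

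Finally, for uniqueness: if $g \colon X \to Y$ is any continuous map with $g|_{X_1} = f$, then for $x \in X$ and any $x_n \to x$ in $X_1$ one has $g(x) = \lim g(x_n) = \lim f(x_n) = \overline{f}(x)$, so $g = \overline{f}$. I do not expect a genuine obstacle here, since this is a classical fact; the only points needing mild care are the interleaving argument for well-definedness and the triangle-inequality bookkeeping in the uniform-continuity step, and the whole argument rests on completeness of $Y$ — precisely the hypothesis under which the lemma is stated and under which it will be applied with $Y = \mathbb{R}$ and $X = Z$.
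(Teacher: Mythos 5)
Your proof is correct and is the standard density-plus-completeness argument; the paper itself does not prove this lemma but simply cites it from Amann--Escher, and your argument is essentially the classical proof given there (Cauchy image sequences via uniform continuity, completeness of $Y$, well-definedness by interleaving, a triangle-inequality estimate for uniform continuity of $\overline{f}$, and uniqueness by density and continuity). No gaps to report.
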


In our case {\cred the} operator $\int_{\RRD} K(x)\dd \mu(x)$ is of course well-defined for any $\mu\in \mathcal{M}(\RRD)$ {\cred and} it {\cred can} be uniquely extended to $Z=\overline{\mathcal{M}(\RRD)}^{(\CCA (\RRD))^*}$ ($\textrm{span}\{ \delta_x\colon  x\in \RRD\}$ is dense subset of $Z$, Proposition \ref{prop:separable}). Denote this uniquely determined extension by
\[\langle K(\cdot), \mu\rangle_{\CCA(\RRD), Z},\]
where $\langle \cdot, \cdot \rangle$ is dual pair. Thus the functional corresponding to (\ref{def:func_opti}) has the form
\begin{equation}\label{def:dual_opti}
\overline{\gamma}(\mu^h)=\widehat{\gamma}\left(\langle K, \mu\rangle_{\CCA(\RRD), Z}\right).
\end{equation}

Now consider the problem 
\begin{equation}\label{prob_opt}
\min_{h\in\RR}\overline{\gamma}(\mu^h).
\end{equation}
 That is, we wish to find an $h^*\in \mathbb{R}$ such that $\overline{\gamma}(\mu^{h^*})\leq \overline{\gamma}(\mu^h)$ for all $h\in \mathbb{R}$. 

%{\bf Remark:} notice that in Theorem \ref{th:main} we consider $h\in [-\frac{1}{2}, \frac{1}{2}]$. Here in the optimal problem we investigate $h \in \RR$. {\bf DOPISAC}

A necessary condition for $\mu^{h^*}$ realizing a minimum is that the gradient of the function $\overline{\gamma}$ is zero at $\mu^{h^*}$
\begin{equation}\label{cond:min}
\left.\partial_h \overline{\gamma}(\mu^{h}) \right|_{h=h^*} =
\widehat{\gamma}'\big|_{\left\langle K,  \mu^h \right\rangle} \cdot \left\langle K,  \left. \partial_h\mu^{h}\right|_{h=h^*} \right\rangle_{\CCA(\RRD), Z}=0.
\end{equation}

For this condition to be satisfied {\cred it is} necessary {\cred that} {\cred $h \mapsto$} $\overline{\gamma}(\mu^h)\in \mathcal{C}^1(Z, \RR)$. This is guaranteed by the following lemma when combined with the differentiability of $\mu^h$ with respect to $h$ (Theorem \ref{th:main}).

\begin{lemma}\label{lem:opti}
If $K(x)\in \CCA(\RRD)$ and $\widehat{\gamma}\in \mathcal{C}^1( \RR)$ then $\overline{\gamma}$ defined by (\ref{def:dual_opti}) is $\mathcal{C}^1(Z, \RR)$.
\end{lemma}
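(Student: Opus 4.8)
The plan is to show that $\overline{\gamma} = \widehat{\gamma} \circ \Lambda$, where $\Lambda\colon Z \to \RR$ is the continuous linear extension of $\mu \mapsto \int_{\RRD} K \dd\mu$ to all of $Z$, and then conclude by the chain rule. The key point is that a composition of a bounded linear map $Z \to \RR$ with a $\mathcal{C}^1$ scalar function is $\mathcal{C}^1$ in the Fréchet sense, with derivative $D\overline{\gamma}(\mu) = \widehat{\gamma}'(\Lambda(\mu))\,\Lambda$, which is continuous in $\mu$ because $\Lambda$ is continuous and $\widehat{\gamma}'$ is continuous.

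First I would establish that $\Lambda$ is well-defined and bounded linear on $Z$. Since $K \in \CCA(\RRD)$, for $\mu \in \mathcal{M}(\RRD)$ one has $\big|\int_{\RRD} K\dd\mu\big| \le \|K\|_{\CCA(\RRD)}\,\|\mu\|_{(\CCA(\RRD))^*}$ by the very definition of the dual norm under the embedding $\mathcal{M}(\RRD)\hookrightarrow (\CCA(\RRD))^*$. Hence the linear functional $\mu \mapsto \int_{\RRD} K\dd\mu$ is Lipschitz (equivalently, uniformly continuous) on the dense subspace $\mathcal{M}(\RRD)$ of $Z$ with constant $\|K\|_{\CCA(\RRD)}$; by the Extension Theorem quoted above (with $X = Z$, $X_1 = \mathcal{M}(\RRD)$, $Y = \RR$) it extends uniquely to a uniformly continuous — and, being a pointwise limit of linear maps, still linear — functional $\Lambda = \langle K, \cdot\rangle_{\CCA(\RRD),Z}$ on $Z$ with $\|\Lambda\|_{Z^*} \le \|K\|_{\CCA(\RRD)}$. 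By construction $\overline{\gamma}(\mu) = \widehat{\gamma}(\Lambda(\mu))$ for all $\mu \in Z$.

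Next I would verify Fréchet differentiability. Fix $\mu \in Z$ and write, for $\nu \in Z$, $\overline{\gamma}(\mu+\nu) - \overline{\gamma}(\mu) = \widehat{\gamma}(\Lambda\mu + \Lambda\nu) - \widehat{\gamma}(\Lambda\mu)$. Since $\widehat{\gamma}\in \mathcal{C}^1(\RR)$, Taylor's theorem with the mean-value form of the remainder gives $\widehat{\gamma}(\Lambda\mu + \Lambda\nu) - \widehat{\gamma}(\Lambda\mu) = \widehat{\gamma}'(\Lambda\mu)\,\Lambda\nu + \big(\widehat{\gamma}'(\Lambda\mu + \theta\,\Lambda\nu) - \widehat{\gamma}'(\Lambda\mu)\big)\Lambda\nu$ for some $\theta \in [0,1]$. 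The candidate derivative at $\mu$ is the bounded linear functional $\nu \mapsto \widehat{\gamma}'(\Lambda\mu)\,\Lambda\nu$; the remainder is bounded in absolute value by $\|\Lambda\|_{Z^*}\|\nu\|_Z \cdot \sup_{|s|\le \|\Lambda\|_{Z^*}\|\nu\|_Z}\big|\widehat{\gamma}'(\Lambda\mu + s) - \widehat{\gamma}'(\Lambda\mu)\big|$, which is $o(\|\nu\|_Z)$ as $\|\nu\|_Z \to 0$ by continuity of $\widehat{\gamma}'$. Thus $\overline{\gamma}$ is Fréchet differentiable at every $\mu$ with $D\overline{\gamma}(\mu) = \widehat{\gamma}'(\Lambda\mu)\,\Lambda \in Z^*$.

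Finally, continuity of $\mu \mapsto D\overline{\gamma}(\mu)$ in the operator norm on $Z^*$ follows from $\|D\overline{\gamma}(\mu_1) - D\overline{\gamma}(\mu_2)\|_{Z^*} = |\widehat{\gamma}'(\Lambda\mu_1) - \widehat{\gamma}'(\Lambda\mu_2)|\,\|\Lambda\|_{Z^*}$, together with the continuity of $\Lambda$ and of $\widehat{\gamma}'$. Hence $\overline{\gamma} \in \mathcal{C}^1(Z,\RR)$, as claimed. I do not expect a genuine obstacle here: the only thing to be slightly careful about is to invoke the Extension Theorem for the definition of $\Lambda$ (rather than assuming $K$ acts on $Z$ a priori) and to note that linearity survives the extension; the rest is the standard chain rule in Banach spaces.
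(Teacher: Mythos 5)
Your proposal is correct and follows essentially the same route as the paper: show that $\mu \mapsto \langle K, \mu\rangle_{\CCA(\RRD), Z}$ (your $\Lambda$) is a bounded linear functional on $Z$ with norm at most $\Vert K\Vert_{\CCA(\RRD)}$, then conclude that $\overline{\gamma}=\widehat{\gamma}\circ\Lambda$ is $\mathcal{C}^1(Z,\RR)$ as a composition of a $\mathcal{C}^1$ function with a bounded linear map. The only difference is that you spell out the extension step and the chain-rule/Fr\'echet-differentiability verification which the paper states without detail; this is a harmless elaboration, not a different argument.
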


\begin{proof}
What we want to show is that if $K\in \CCA(\RRD)$ then {\cred the} functional $\mu \mapsto \langle K, \mu\rangle_{\CCA(\RRD), Z}$ is linear and bounded on $Z$. Then $\widehat{\gamma}(\langle K, \mu\rangle_{\CCA(\RRD), Z})\in \mathcal{C}^1(Z, \RR)$, as a composition of $\mathcal{C}^1$-function and a bounded linear functional.

Linearity of $\mu\mapsto\langle K, \mu\rangle_{\CCA(\RRD), Z}$ is clear. Following holds
\begin{equation*}
\begin{split}
\left|\langle K, \mu\rangle_{\CCA(\RRD), Z}\right| \leq \Vert K\Vert_{\CCA(\RRD)} \cdot \Vert \mu \Vert_{(\CCA(\RRD))^*}\leq C\Vert \mu \Vert_{(\CCA(\RRD))^*},
\end{split}
\end{equation*}
where constant $C=\sup_{x\in\RRD}|K(x)|+\sup_{x\in\RRD}|\nabla K(x)|+ \sup_{x, y \in \RRD x\neq y} \frac{|\nabla K(x)-\nabla K(y)|}{|x-y|^\alpha}$.

Thus the functional $\mu\mapsto\langle K, \mu\rangle_{\CCA(\RRD), Z}$ is bounded. We conclude that $\overline{\gamma} \in \mathcal{C}^1(Z, \RR)$.
\end{proof}

Of course, there are many optimization methods which do not depend on finding derivative analytically and then setting it to zero. When a functional $\overline{\gamma}(\mu^h)$ is differentiable with respect to $h$, an optimization problem (\ref{prob_opt}) can be solved with gradient-based analytical methods or through numerical methods such as the steepest descent. When $\overline{\gamma}(\mu^h)$ is not differentiable, the above-mentioned methods cannot be applied, the problem becomes more complex numerically.  And for differentiability of $\overline{\gamma}(\mu^h)$ necessary is differentiability of $\mu^h$, which is satisfied by Theorem \ref{th:main}.

\begin{remark} If $\widehat{\gamma}$ is convex then condition ({\ref{cond:min}}) is not only necessary but also sufficient for $\mu^{h^*}$ to realize a minimum.\end{remark}

\subsection{Further application}

{\cred
In \cite{Rosi:2016} authors consider optimization in the structured population model defined by
\begin{equation}\label{apl:str}
\left\{
                \begin{array}{l}
                  \partial_t\mu_t+ \partial_x\Big(b(t)(\mu_t, x)\mu_t\Big)+w(t)(\mu_t, x)\mu_t=0,\\
                  \big(b(t)(\mu_t, 0)\big)D_{\lambda}\mu_{t}(0)=\int_0^{\infty}\beta(t) (\mu_t, x)\dd \mu_t,\\
                  \mu_{t=0}=\mu_0,
                \end{array}
              \right.
\end{equation}
 where $t\in [0, \infty)$ and $x\in\mathbb{R}_{+}$ is a biological parameter, typically age or size. The
unknown $\mu_t$ is a time dependent, non-negative and finite Radon measure. The growth function $b$ and the~mortality rate $w$ are strictly positive, while the birth function $\beta$ is non-negative -- $b, w, \beta$ are Nemytskii operators.
By $D_{\lambda}\mu_{t}$ we denote the Radon–-Nikodym derivative of $\mu_t$ with respect to the Lebesgue
measure $\lambda$ computed at 0. The initial datum $\mu_0$ is a non-negative Radon measure.

\begin{remark}
The reason for analyzing solutions to structured population models in the space of~measures is as follows: typical experimental data are not continuous, they provide information on percentiles, i.e., the number of individuals in some intervals of the structural variable (like age). In the case of demography and epidemiology a number of births are typically used per years. %A detailed discussion of this topic can be found for instance in ([20], Sect. 5).
\end{remark}

Aiming at the optimal control of the solution to (\ref{apl:str}), a control parameter $h$ is introduced (possibly time and/or state dependent), attaining values in a given set $\mathcal{H}$. Therefore,
we obtain:
\begin{equation}\label{apl:str_per}
\left\{
                \begin{array}{l}
                  \partial_t\mu_t^h+ \partial_x\big(b(t;h)(\mu_t^h, x)\mu_t^h\big)+w(t; h)(\mu_t^h, x)\mu_t^h=0,\\
                  \big(b(t; h)( \mu_t^h, 0)\big)D_{\lambda}\mu_{t}^h(0)=\int_0^{\infty}\beta(t; h)(\mu_t^h, x)\dd \mu_t,\\
                  \mu_{t=0}^h=\mu_0.
                \end{array}
              \right.
\end{equation}

The goal is to find minimum of a given functional 
\begin{equation}\label{eqn:functional}
\mathcal{J}(\mu_t^h)=\int_0^{\infty} j(t, \mu_t^h; h) \dd t,
\end{equation}
within a suitable function space i.e. to find an $h^*\in \mathcal{H}$ such that $\mathcal{J}(\mu^{h^*})\leq \mathcal{J}(\mu^h)$ for all $h\in \mathcal{H}$. 

Aiming at the optimal control problem in \cite{Rosi:2016} the Escalator Boxcar Train (EBT) algorithm is adapted (defined in \cite{jabl:2014}), i.e. an appropriate ODE system is used approximating the~original PDE model. Authors mention that solutions to conservation or balance laws typically depend in a Lipschitz continuous way on the initial datum as well as from the functions defining the equation. This does not allow the use of differential tools in the search for the~optimal control.

Since solution to the transport equation is differentiable with respect to parameter, mathematical tools applied to (\ref{eqn:functional}) can be extended by e.g. gradient methods.
}
%%%%%%%%%%%%%%%%%%%%%%%%%%%%%%%%%%%%%%%%%%%%%%%%%%%%%%%%%%%%%%%%%%%

\section{Characterization of the space $Z$}\label{charZ}
In this section we {\cred establish some further properties of} the space $Z$ defined by (\ref{def:defZ}). {\cred The identification of the dual space $Z^*$ in Proposition \ref{prop:Z_isom} is particularly interesting eg. in view of the~application to control theory, discussed in Section \ref{section:opti}.} By $\delta_x$ we denote the Dirac measure concentrated in $x$.  
\begin{prop}\label{prop:separable}
Let $Z$ be given by (\ref{def:defZ}). Then the set $\mathrm{span}\{\delta_x\colon  x\in\mathbb{Q}^d \}$ is dense in $Z$ with respect to {\cred the} $(\CCA(\RRD))^*$-topology, i.e.
\[Z=\overline{\mathrm{span}\{\delta_x\colon x\in\mathbb{Q}^d \}}^{(\CCA(\RRD))^*}.\]

Consequently, $Z$ is a separable space.
\end{prop}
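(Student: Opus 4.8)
The plan is to prove the density statement in two stages: first, that $\mathrm{span}\{\delta_x : x \in \RRD\}$ is dense in $\mathcal{M}(\RRD)$ with respect to the $(\CCA(\RRD))^*$-norm, and second, that one can further replace arbitrary points $x \in \RRD$ by rational points $x \in \QQD$. Since $Z$ is by definition the $(\CCA(\RRD))^*$-closure of $\mathcal{M}(\RRD)$, combining these two facts with the transitivity of closure gives $Z = \overline{\mathrm{span}\{\delta_x : x \in \QQD\}}^{(\CCA(\RRD))^*}$, and separability follows immediately because the set of \emph{rational} linear combinations of $\{\delta_x : x \in \QQD\}$ is then a countable dense subset.

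For the first stage, I would take an arbitrary finite signed (Radon) measure $\mu \in \mathcal{M}(\RRD)$; by the Hahn--Jordan decomposition it suffices to treat $\mu \in \mathcal{M}^+(\RRD)$. The idea is to approximate $\mu$ by atomic measures of the form $\sum_{i=1}^N \mu(Q_i)\,\delta_{x_i}$, where $\{Q_i\}$ is a finite measurable partition of a large ball $B_R$ into pieces of small diameter, $x_i \in Q_i$, plus a correction for the mass $\mu(\RRD \setminus B_R)$, which is small for $R$ large since $\mu$ is finite. For any test function $\psi$ with $\|\psi\|_{\CCA(\RRD)} \le 1$ one estimates
\[
\left| \int_{B_R} \psi \dd\mu - \sum_{i=1}^N \psi(x_i)\,\mu(Q_i) \right|
\le \sum_{i=1}^N \int_{Q_i} |\psi(x) - \psi(x_i)| \dd\mu(x)
\le \Big(\max_i \diam Q_i\Big)\, \mu(B_R),
\]
using that $\|\psi\|_{\CCA} \le 1$ forces $\psi$ to be Lipschitz with constant at most $1$. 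Taking the supremum over such $\psi$ and then letting $\max_i \diam Q_i \to 0$ and $R \to \infty$ shows that atomic measures are dense in $\mathcal{M}^+(\RRD)$, hence in $\mathcal{M}(\RRD)$, in the $(\CCA(\RRD))^*$-norm.

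For the second stage, it is enough to show each single $\delta_x$, $x \in \RRD$, lies in the closure of $\mathrm{span}\{\delta_q : q \in \QQD\}$; in fact $\delta_x$ is the norm-limit of $\delta_{q_n}$ for any sequence $q_n \in \QQD$ with $q_n \to x$, since for $\|\psi\|_{\CCA} \le 1$ one has $|\psi(x) - \psi(q_n)| \le |x - q_n| \to 0$, so $\|\delta_x - \delta_{q_n}\|_{(\CCA(\RRD))^*} \le |x - q_n| \to 0$. Combining the two stages yields the claimed density, and separability follows since $\mathrm{span}_{\QQ}\{\delta_q : q \in \QQD\}$ is countable and dense.

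The only genuinely delicate point is the very first estimate: one must be sure that $\|\psi\|_{\CCA(\RRD)} \le 1$ really does bound the Lipschitz constant of $\psi$ by $1$, which it does because the norm on $\CCA(\RRD)$ includes the term $\sup_{x}|\nabla_x \psi(x)|$ and $\RRD$ is convex, so $|\psi(x) - \psi(y)| \le \sup|\nabla\psi|\cdot|x-y| \le |x-y|$. Everything else is a routine tightness/partition argument, and no subtlety arises from signs once Hahn--Jordan is invoked.
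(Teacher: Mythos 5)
Your proof is correct and follows essentially the same route as the paper: truncate to a large ball using finiteness of the measure, partition the ball into sets of small diameter, replace the measure by an atomic one with weights given by the masses of the pieces, and control the error in the $(\CCA(\RRD))^*$-norm via the Lipschitz bound $|\psi(x)-\psi(y)|\le |x-y|$ coming from $\sup|\nabla\psi|\le 1$; the paper merely merges your two stages by choosing the partition from a cover by balls with rational centers, so the atoms sit at points of $\QQD$ from the start. Your explicit passage to rational coefficients for countability is a correct (and slightly more careful) finish of the separability claim.
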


\begin{proof}
 We want to show that for any measure $\mu\in \mathcal{M}(\RRD)$ there exists a sequence $\lbrace \mu_n \rbrace_{n\in \mathbb{N}} \in {\rm span}\{ \delta_x\colon  x \in \mathbb{Q}^d \}$ such that $\Vert\mu_n - \mu\Vert_{(\CCA(\RRD))^*} \to 0 $ as $n\to \infty$. %Thus we show that for any measure $\mu\in \mathcal{M}(\RRD)$ and any $\varepsilon>0$ there exists $\mu^\varepsilon \in \mathrm{span}\{\delta_x:x\in\mathbb{Q}^d \}$ such that $\Vert \mu^\varepsilon -\mu \Vert_{(\CCA(\RRD))^*}\leq \varepsilon$.

We consider bounded Radon measures, thus
for any $\mu\in \mathcal{M}(\RRD)$  and for any $\varepsilon>0$ there exists $R_{\varepsilon}$  such that $|\mu|(\RRD\setminus \mathcal{B}(0, R_{\varepsilon}))\leq \frac{\varepsilon}{2}$. The closure of a ball $\mathcal{B}(0, R_{\varepsilon})$ in $\RRD$ as a compact set has finite cover $\lbrace{\mathcal{B}(g_i, \frac{\varepsilon}{4 \Vert\mu\Vert_{\textrm{TV}}})\rbrace}_{i=1}^{n(\varepsilon)}$, where $g_i\in\mathbb{Q}^d$.
Denote by $\mathcal{B}_i\colonequals \mathcal{B}(g_i, \frac{\varepsilon}{4 \Vert\mu\Vert_{\textrm{TV}}})$. Then define
 \begin{equation}\label{def:u}
 U_{i, \varepsilon}\colonequals \left(\mathcal{B}(0, R_\varepsilon) \cap \mathcal{B}_i\right) \backslash \cup_{j=1}^{i-1} \mathcal{B}_j
 \end{equation} are disjoint Borel sets and $\cup_{i=1}^{n(\varepsilon)} U_{i, \varepsilon} = \mathcal{B}(0, R_\varepsilon)$.  
Notice that $g_i$ (the center of $\mathcal{B}_i$) is not necessarily contained in $U_{i,\varepsilon}$. In case $g_i$ is not contained in $U_{i, \varepsilon}$ we take any other point of the ball $\mathcal{B}_i$ contained in $U_{i,\varepsilon}$, we will denote this point the same way, slightly abusing notation.

For any $\mu \in \mathcal{M}(\RRD)$ and any $\varepsilon>0$ we consider $\mu^{\varepsilon} = \sum_{i=1}^{n(\varepsilon)} \mu(U_{i, \varepsilon})\cdot \delta_{g_i}$ (linear combination of~Dirac deltas concentrated at points $g_i \in \mathbb{Q}^d$). Denote by $\widehat{\mu}\colonequals \left.\mu\right|_{\mathcal{B}(0,R_\varepsilon)}$ the measure restricted to $\mathcal{B}(0, R_\varepsilon)$. Then the following holds:
\begin{align*}
\Vert \mu^{\varepsilon} - \mu \Vert_{(\CCA(\RRD))^*} \leq \left\Vert \mu^\varepsilon - \widehat{\mu}\right\Vert_{(\CCA(\RRD))^*}+ \left\Vert \widehat{\mu} - \mu \right\Vert_{(\CCA(\RRD))^*}
\leq  \left\Vert \mu^\varepsilon - \widehat{\mu}\right\Vert_{(\CCA(\RRD))^*} + \frac{\varepsilon}{2}.
\end{align*}

We need to estimate the following
\begin{align*}
&\left\Vert \mu^\varepsilon - \widehat{\mu} \right\Vert_{(\CCA(\RRD))^*} \\
&= \sup \left\lbrace\int_{\RRD} f \dd\left(\mu^\varepsilon - \widehat{\mu}\right)\colon f\in\CCA(\RRD), \Vert f \Vert_{\CCA(\RRD)} \leq 1 \right\rbrace\\
& \leq \sup \left\lbrace\int_{\RRD} f \dd(\mu^\varepsilon - \widehat{\mu})\colon f \in \mathrm{Lip}(\RRD), \Vert f\Vert_\infty + \Vert \nabla f \Vert_\infty \leq 1 \right\rbrace\\
&=\sup \left\lbrace\int_\RRD f\dd \Big(\sum_{i=1}^{n(\varepsilon)}\left(\mu(U_{i, \varepsilon})\delta_{g_i}- \widehat{\mu}\right)\Big)\colon f \in \mathrm{Lip}(\RRD), \Vert f\Vert_\infty + \Vert \nabla f \Vert_\infty \leq 1 \right\rbrace\\
\end{align*}
\begin{align*}&= \sup \left\lbrace\sum_{i=1}^{n(\varepsilon)}\left(\int_{U_{i, \varepsilon}}f(g_i) \dd \mu- \int_{U_{i, \varepsilon}}f\dd \mu\right)\colon f \in \mathrm{Lip}(\RRD), \Vert f\Vert_\infty + \Vert \nabla f \Vert_\infty \leq 1 \right\rbrace\\
&= \sup \left\lbrace\sum_{i=1}^{n(\varepsilon)}\int_{U_{i, \varepsilon}}(f(g_i)-f)\dd \mu\colon f \in \mathrm{Lip}(\RRD), \Vert f\Vert_\infty + \Vert \nabla f \Vert_\infty \leq 1 \right\rbrace\\
&\leq \sup \left\lbrace\sum_{i=1}^{n(\varepsilon)}\int_{U_{i, \varepsilon}}|g_i-x| \dd|\mu|\colon f \in \mathrm{Lip}(\RRD), \Vert f\Vert_\infty + \Vert \nabla f \Vert_\infty \leq 1 \right\rbrace\\
&=\sum_{i=1}^{n(\varepsilon)}\int_{U_{i, \varepsilon}}|g_i-x| \dd|\mu|
\leq\sum_{i=1}^{n(\varepsilon)}\int_{U_{i, \varepsilon}} \frac{\varepsilon}{2\Vert\mu\Vert_{\textrm{TV}}}\dd|\mu|\\
& = \frac{\varepsilon}{2\Vert\mu\Vert_{\textrm{TV}}} \int_{\mathcal{B}(0, R_\varepsilon)} \dd|\mu| = \frac{\varepsilon}{2\Vert\mu\Vert_{\textrm{TV}}}|\mu|(\mathcal{B}(0, R_\varepsilon))\leq \frac{\varepsilon}{2}.
\end{align*}

And now we get that for any $\mu\in \mathcal{M}(\RRD)$ there exists an element $\mu^\varepsilon \in \mathrm{span}\{\delta_x\colon x\in\mathbb{Q}^d \}$ such that $\Vert \mu^{\varepsilon} - \mu \Vert_{(\CCA(\RRD))^*} \leq \varepsilon$.

Hence, $\textrm{span}\{\delta_x\colon  x\in \mathbb{Q}^d\}$ is a dense subset of $\mathcal{M}(\RRD)$. Countability of $\textrm{span}\{\delta_x\colon  x\in \mathbb{Q}^d\}$ is clear because of countability of $\mathbb{Q}^d$. 
This implies that the space $Z$ is separable.
%Thus we conclude $\overline{\textrm{span}\{ \delta_x: x\in \mathbb{Q}^d\}}^{(\CCA(\RRD))^*} = \overline{\mathcal{M}(\RRD)}^{(\CCA(\RRD))^*}$.
\end{proof}

Moreover, we can characterize the dual space of $Z$, similar in spirit to \cite[Theorem 3.6, Theorem 3.7]{Hille-Worm:2009}. This result may be of separate interest in other settings.

Before giving and proving this characterization, we need the following lemma.

\begin{lemma}\label{lem:regu_deltabu}
The mapping defined by $\overline{\delta}(x)\colonequals \delta_x$ is  $\CCA(\RRD, Z)$.
\end{lemma}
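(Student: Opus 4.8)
The plan is to show that the map $\overline{\delta}\colon \RRD \to Z$, $x \mapsto \delta_x$, is well-defined (i.e. takes values in $Z$), is bounded, differentiable with H\"older-continuous derivative. Since $\delta_x \in \mathcal{M}(\RRD) \subseteq Z$ for every $x$, well-definedness is immediate; the content is the regularity estimates, which are all computed via the dual norm $\Vert \cdot \Vert_{(\CCA(\RRD))^*}$, testing against $\psi \in \CCA(\RRD)$ with $\Vert \psi \Vert_{\CCA} \le 1$.

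First I would estimate $\Vert \delta_{x} - \delta_{y}\Vert_{(\CCA(\RRD))^*}$. For $\psi$ with $\Vert \psi \Vert_{\CCA} \le 1$ we have $|\int \psi \, d(\delta_x - \delta_y)| = |\psi(x) - \psi(y)| \le \Vert \nabla_x \psi \Vert_\infty |x-y| \le |x-y|$, so $\overline{\delta}$ is Lipschitz, hence continuous into $Z$. Next, to identify the candidate derivative $\partial_{x_k}\overline{\delta}(x)$, note that for smooth $\psi$, $\partial_{x_k}[\psi(x)] = \partial_{x_k}\psi(x) = -\int \psi \, d(\partial_{x_k}\delta_x)$ in the distributional sense, so the natural candidate is the element $D_k(x) \in Z$ obtained as the $(\CCA)^*$-limit of the difference quotients $\frac{1}{\eps}(\delta_{x+\eps e_k} - \delta_x)$; one must check this limit exists in $Z$. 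I would do this by a Cauchy-sequence argument exactly as in the proof of Theorem~\ref{th:main}: for $\psi$ with $\Vert \psi\Vert_{\CCA}\le 1$,
\[
\left| \int \psi \, d\Big( \tfrac{\delta_{x+\eps_1 e_k} - \delta_x}{\eps_1} - \tfrac{\delta_{x+\eps_2 e_k} - \delta_x}{\eps_2}\Big) \right|
= \left| \tfrac{\psi(x+\eps_1 e_k)-\psi(x)}{\eps_1} - \tfrac{\psi(x+\eps_2 e_k)-\psi(x)}{\eps_2}\right|,
\]
and a first-order Taylor expansion of $\psi$ at $x$ with the H\"older remainder bound $|R(x+\eps e_k, x)| \le C |\nabla_x\psi|_\alpha\, \eps^{1+\alpha} \le C\eps^{1+\alpha}$ from \eqref{taylor_ex} shows the right-hand side is bounded by $C(|\eps_1|^\alpha + |\eps_2|^\alpha)$, uniformly in $\psi$. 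Completeness of $Z$ then yields a limit $D_k(x)\in Z$, and the same estimate with one quotient replaced by its limit gives $\Vert \frac{\delta_{x+\eps e_k}-\delta_x}{\eps} - D_k(x)\Vert_{(\CCA)^*} \le C|\eps|^\alpha$, proving differentiability with $\partial_{x_k}\overline{\delta}(x) = D_k(x)$.

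It remains to show $x \mapsto \nabla_x \overline{\delta}(x) = (D_1(x),\dots,D_d(x))$ is $\alpha$-H\"older continuous into $Z$. Testing against $\psi$ with $\Vert\psi\Vert_{\CCA}\le 1$ and passing to the limit in the difference quotient, one identifies $\langle \psi, D_k(x)\rangle = \partial_{x_k}\psi(x)$ (up to sign/orientation convention), so
\[
\Vert D_k(x) - D_k(y)\Vert_{(\CCA)^*} = \sup_{\Vert\psi\Vert_{\CCA}\le 1} |\partial_{x_k}\psi(x) - \partial_{x_k}\psi(y)| \le |x-y|^\alpha,
\]
the last bound being exactly the defining H\"older seminorm of $\psi$ bounded by $\Vert\psi\Vert_{\CCA}\le 1$. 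Collecting the three facts — $\overline{\delta}$ maps into $Z$, is differentiable, and has $\alpha$-H\"older derivative — gives $\overline{\delta} \in \CCA(\RRD, Z)$.

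The main obstacle is purely the bookkeeping of ensuring every estimate is taken \emph{uniformly over the unit ball of $\CCA(\RRD)$} so that it becomes a genuine bound on the $Z = (\CCA)^*$-norm rather than a pointwise statement about fixed $\psi$; once the Taylor remainder in \eqref{taylor_ex} is invoked with its $\alpha$-H\"older control, everything is uniform and the completeness of $Z$ closes the argument. A secondary point to handle carefully is that the difference quotients $\frac{1}{\eps}(\delta_{x+\eps e_k}-\delta_x)$ are genuine elements of $\mathcal{M}(\RRD)\subseteq Z$, so their limit lies in $Z$ by closedness — no separate argument that the derivative is a measure is needed (indeed it need not be).
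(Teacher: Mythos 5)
Your proposal is correct and follows essentially the same route as the paper: identify the derivative at $x$ with the functional $\psi \mapsto \nabla\psi(x)\cdot\lambda$, verify the difference quotients of $\delta_x$ converge to it in the $(\CCA(\RRD))^*$-norm via the first-order Taylor expansion with $\alpha$-H\"older remainder taken uniformly over the unit ball, and deduce the $\alpha$-H\"older bound $\Vert D\overline{\delta}(x)-D\overline{\delta}(y)\Vert \le |x-y|^\alpha$ from the H\"older seminorm of $\nabla\psi$. The only difference is presentational: you construct the derivative as a Cauchy limit of difference quotients (using completeness of $Z$, which also settles membership in $Z$), whereas the paper writes the limiting functional down directly and asserts the convergence; your version actually supplies that detail.
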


\begin{proof}
For $f\in \CCA(\RRD)$, $\lambda\in\RRD$ and $x\in\RRD$ define $D\overline{\delta}(x)\in\mathcal{L}(\RRD,Z)$ by means of
\[
\pair{D\overline{\delta}(x)\lambda}{f}_{(\CCA(\RRD))^*, \CCA(\RRD)} \colonequals  \lambda\bullet \nabla f(x).
\]

By $\bullet$ we denote an inner product on $\RRD$. Thus, $\lambda\bullet\nabla f(x)$ relates {\cred to} the gradient of $f$ in the~direction given by $\lambda$. Then
\[
\frac{1}{|\lambda|}\bigl[ \delta_{x+\lambda}-\delta_x - D\overline{\delta}(x)\lambda\bigr] \to 0
\]
in $Z$ as $\lambda\to 0$. Thus $D\overline{\delta}(x)$ is the Fr\'echet derivative of $\overline{\delta}$ at $x$.

Of course, for $x,y\in\RRD$, $x\neq y$,
\begin{align*}
\|D\overline{\delta}(x) - D\overline{\delta}(y)\|_{Z} & =  \|D\overline{\delta}(x) - D\overline{\delta}(y)\|_{(\CCA(\RRD))^*}
\end{align*}
because $Z$ is linear subspace of $(\CCA(\RRD))^*$, thus $ \Vert \cdot\Vert_Z=\Vert \cdot \Vert_{(\CCA(\RRD))^*}$ coincides on $Z$. Now, we can estimate
\begin{align*}
\|D\overline{\delta}(x) - D\overline{\delta}(y)\|_{(\CCA(\RRD))^*}&=
\sup_{\lambda\in\RRD,\ |\lambda|\leq 1} \|D\overline{\delta}(x)\lambda - D\overline{\delta}(y)\lambda\|_{(\CCA(\RRD))^*}
\end{align*}
\begin{align*}
& = \sup_{\lambda\in\RRD,\ |\lambda|\leq 1} \sup_{\Vert f \Vert_{\CCA}\leq 1} \left|\pair{D\overline{\delta}(x)\lambda - D\overline{\delta}(y)\lambda}{f}_{(\CCA(\RRD))^*, \CCA(\RRD)}\right|\\
& = \sup_{\lambda\in\RRD,\ |\lambda|\leq 1} \sup_{\Vert f \Vert_{\CCA}\leq 1} \left|\sum_{i=1}^d \lambda_i\left(\partial_{x_i} f(x) - \partial_{x_i} f(y)\right) \right|\\
& \leq \sup_{\lambda\in\RRD,\ |\lambda|\leq 1} \sup_{\Vert f \Vert_{\CCA}\leq 1} |\lambda|\cdot\left(\sum_{i=1}^d \left|\partial_{x_i} f(x) - \partial_{x_i} f(y)\right|^2\right)^{1/2}\\
& \leq \sup_{\Vert f \Vert_{\CCA}\leq 1} \frac{|\nabla f(x) - \nabla f(y)|}{|x-y|^\alpha}\cdot |x-y|^\alpha\\
& \leq \sup_{\Vert f \Vert_{\CCA}\leq 1} \|f\|_{(\CCA(\RRD))^*}\cdot |x-y|^\alpha \ \leq\ |x-y|^\alpha.
\end{align*}

This concludes that $\|D\overline{\delta}(x) - D\overline{\delta}(y)\|_{Z} \leq |x-y|^\alpha$, thus 
 $\overline{\delta}\in \CCA(\RRD, Z)$.
\end{proof}

\begin{prop}\label{prop:Z_isom}
The space $Z^*$ is isomorphic to $\CCA(\RRD)$ under the map $\phi \mapsto T\phi$, where $T\phi(x)\colonequals \phi(\delta_x)$, $T\colon  Z^* \to \CCA(\RRD)$.
\end{prop}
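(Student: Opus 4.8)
The plan is to show that $T\colon Z^*\to\CCA(\RRD)$, $T\phi(x)\colonequals\phi(\delta_x)$, is a bounded linear bijection between Banach spaces, and then to conclude it is a topological isomorphism either via the open mapping theorem or, more concretely, by exhibiting explicit two-sided norm bounds.

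\textbf{Well-definedness and boundedness of $T$.} First I would check that $T\phi$ genuinely lies in $\CCA(\RRD)$. Writing $\overline\delta(x)=\delta_x$, Lemma~\ref{lem:regu_deltabu} gives $\overline\delta\in\CCA(\RRD,Z)$, so that $T\phi=\phi\circ\overline\delta$ and, since $\phi$ is linear and bounded, the chain rule yields $D(T\phi)(x)=\phi\circ D\overline\delta(x)$ (equivalently $\nabla(T\phi)(x)$ is the representative of this functional). Using $\|\delta_x\|_{(\CCA(\RRD))^*}\le 1$, the uniform bound $\|D\overline\delta(x)\|_{\mathcal{L}(\RRD,Z)}\le 1$, and the estimate $\|D\overline\delta(x)-D\overline\delta(y)\|\le|x-y|^\alpha$ established inside the proof of that lemma, one obtains $\|T\phi\|_{\CCA(\RRD)}\le 3\,\|\phi\|_{Z^*}$. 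Linearity of $\phi\mapsto T\phi$ is immediate, so $T$ is a well-defined bounded linear operator.

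\textbf{Injectivity and surjectivity.} If $T\phi=0$ then $\phi$ vanishes on $\mathrm{span}\{\delta_x:x\in\RRD\}$, which is dense in $Z$ by Proposition~\ref{prop:separable}; continuity of $\phi$ then forces $\phi=0$, so $T$ is injective. For surjectivity, given $f\in\CCA(\RRD)$ consider $L_f\colon\mathcal{M}(\RRD)\to\RR$, $L_f(\mu)=\int_{\RRD}f\,\dd\mu$. By the very definition of the dual norm one has $|L_f(\mu)|\le\|f\|_{\CCA(\RRD)}\,\|\mu\|_{(\CCA(\RRD))^*}$, so $L_f$ is Lipschitz (hence uniformly continuous) on $\mathcal{M}(\RRD)$ with $\|L_f\|\le\|f\|_{\CCA(\RRD)}$. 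Since $\mathcal{M}(\RRD)$ is dense in $Z$ and $\RR$ is complete, the Extension Theorem gives a unique continuous extension $\phi_f$ to $Z$; passing to limits preserves linearity, so $\phi_f\in Z^*$ with $\|\phi_f\|_{Z^*}\le\|f\|_{\CCA(\RRD)}$, and $T\phi_f(x)=\phi_f(\delta_x)=f(x)$, i.e. $T\phi_f=f$. By injectivity $\phi_f=T^{-1}f$, hence $\|T^{-1}f\|_{Z^*}\le\|f\|_{\CCA(\RRD)}$.

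\textbf{Conclusion.} Combining the above, $T$ is a bounded linear bijection with $\|T\|\le 3$ and $\|T^{-1}\|\le 1$, hence a topological linear isomorphism of $Z^*$ onto $\CCA(\RRD)$. I expect the only genuine difficulty to be the first step, namely verifying that $T\phi$ has an $\alpha$-H\"older gradient with norm controlled by $\|\phi\|_{Z^*}$; this is precisely the purpose of the regularity of $x\mapsto\delta_x$ recorded in Lemma~\ref{lem:regu_deltabu}, so once that lemma is available the remaining steps reduce to routine functional-analytic bookkeeping (density of the Dirac span, boundedness of integration against a $\CCA$ function, and the extension argument).
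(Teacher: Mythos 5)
Your proposal is correct and follows essentially the same route as the paper's proof: Lemma \ref{lem:regu_deltabu} to show $T\phi\in\CCA(\RRD)$, density of the span of Dirac measures (Proposition \ref{prop:separable}) for injectivity, and defining the functional by integration against $f$ on the dense subspace and extending by continuity for surjectivity, with $\|T^{-1}\|\le 1$. The only notable difference is that you bound $\|T\|$ explicitly (by $3$) using the quantitative estimates inside Lemma \ref{lem:regu_deltabu}, whereas the paper obtains boundedness of $T$ by appealing to the Banach isomorphism theorem; your variant is slightly more self-contained but otherwise the same argument.
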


\begin{proof}
We need to show that $T$ is bijection from $(Z^*, \Vert \cdot \Vert_{Z^*})$ to $(\CCA(\RRD), \Vert \cdot \Vert_{\CCA})$ such that
\[
T(\lambda_1 z_1^*+\lambda_2 z_2^*)=\lambda_1 T(z_1^*)+\lambda_2 T(z_2^*), 
\]
for $z_1^*, z_2^* \in Z^*$ and $\lambda_1, \lambda_2 \in \RRD$,  where
\[\Vert z^* \Vert_{Z^*}= \sup_{z\in Z} \left\lbrace |z^*( z)|\colon  \Vert z \Vert_Z \leq 1 \right\rbrace= \sup_{z \in Z}\{ z^*(z)\colon  \Vert z \Vert_Z\leq 1\}.\] In addition $T$ is bounded. By Banach Isomorphism Theorem, $T^{-1}$ is bounded.\\

{\bf Step 1.}
Obviously the mapping defined by $T\phi(x)=\phi(\delta_x)$ maps $Z^*$ into $\RR^{\RRD}$, where by $\RR^{\RRD}$ we denote a function space from $\RRD$ to $\RR$. 
The mapping $T$ is injective, because if $z_1^* \neq z_2^*$ then using density of $\textrm{span}\lbrace{ \delta_x\colon  x\in \RRD \rbrace}$ in $Z$ (Proposition \ref{prop:separable}) there exists $x\in \RRD$ such that
\[ z_1^*(\delta_x) \neq z_2^*(\delta_x) \Rightarrow \left(Tz_1^*\right)(x)\neq \left(Tz_2^*\right)(x)
.\]
Indeed,
\[
z_1^* \neq z_2^* \Rightarrow \exists z\in Z \textrm{ such that } z_1^*(z) \neq z_2^*(z).
\]

Since $\textrm{span}\lbrace{ \delta_x\colon  x\in \RRD \rbrace}$ is dense in Z, there exists $\lbrace{ z_n \rbrace}_{n\in \NN} \subset \textrm{span}\lbrace{ \delta_x\colon  x\in \RRD \rbrace}$ such that $z_n \to z$. Functionals $z_1^*, z_2^*$ are continuous and thus there exists $ n \textrm{ such that } z_1^*(z_n) \neq z_2^*(z_n)$. Of course $z_n= \sum_{i=1}^{k(n)} \alpha_i \delta_{x_i}$ and $z_1^*, z_2^*$ are linear
\[
\sum_{i=1}^{k(n)} \alpha_i z_1^*(\delta_{x_i}) \neq \sum_{i=1}^{k(n)} \alpha_i z_2^*(\delta_{x_i}).
\]

% Take $\textrm{span}\lbrace{ \delta_x: x\in \RRD\rbrace}$.
To show that the mapping $T$ is linear we need to show
\[T(\lambda_1 z_1^* + \lambda_2 z_2^*)= \lambda_1 Tz_1^* + \lambda_2 Tz_2^*, \qquad \textrm{ for all } \lambda_1, \lambda_2 \in \RRD, z_1^*, z_2^* \in Z^*,\]

what means that $\forall x\in \RRD$, $T(\lambda_1 z_1^* + \lambda_2 z_2^*)(x)= \lambda_1 Tz_1^*(x) + \lambda_2 Tz_2^*(x)$.
Indeed, $T(\lambda_1 z_1^* + \lambda_2 z_2^*)(x)= (\lambda_1 z_1^* + \lambda_2 z_2^*) (\delta_x)= \lambda_1 z_1^*  (\delta_x)+ \lambda_2 z_2^*(\delta_x)=\lambda_1 T(z_1^*)(x) + \lambda_2 T(z_2^*)(x).$

{\bf Step 2.} First we prove that $\textrm{im}(T(Z^*)) \subseteq \CCA(\RRD)$. By Lemma \ref{lem:regu_deltabu} we know that $(x \mapsto \delta_x) \in \CCA(\RRD, Z)$ and then $(x \mapsto z^*(\delta_x)) \in \CCA(\RRD,\RR)$ -- as a composition of two functions $(x \mapsto \delta_x) \in \CCA (\RRD, Z)$ and $z^* \in \mathcal{L}(Z, \RR)$. %Moreover by step 2 we know that there exists $f_{z^*}\in \BL$ such that  $z^{\ast} (\delta_x) = f_{z^{\ast}}(x)$.
Therefore $\left(x \mapsto Tz^*(x)\right)\in \CCA(\RRD, \RR)$.

{\bf Step 3.} To prove the opposite inclusion $\CCA(\RRD)\subseteq \textrm{im}(T(Z^*))$, let us consider an arbitrary $y\in \CCA(\RRD)$. We want to show there exists $z^*_y$ such that $y=Tz^*_y$. Define a functional $z^*_y (\delta_x)\colonequals y(x)$. Our goal is to show that $z^*_y \in Z^*$.
It is enough to consider only $z\in \textrm{span} \lbrace{ \delta_x\colon  x\in \RRD \rbrace}$ and then
\[
|z^*_y(z)| = |z_y^*(\sum_{i=1}^n \alpha_i \delta_{x_i})|,
\]
functional $z^*_y$ is linear thus above is equal to $|\sum_{i=1}^n \alpha_i \cdot z^*_y(\delta_{x_i})|$. Using the definition of $z^*_y$ the~following holds
\begin{equation*}
\begin{split}
&\left|\sum_{i=1}^n \alpha_i  z^*_y(\delta_{x_i})\right|
=
\left|\sum_{i=1}^n \alpha_i y(x_i)\right|
=
\left|\sum_{i=1}^n \alpha_i \int_{\RRD} y \dd\delta_{x_i}\right|\\ 
&\qquad \qquad \qquad=
\left|\int_{\RRD} y \dd(\sum_{i=1}^n \alpha_i \delta_{x_i})\right|
= \left|\int_{\RRD} y \dd z\right| 
\leq 
\Vert y \Vert_{\CCA} \Vert z\Vert_{(\CCA(\RRD))^*}.
\end{split}
\end{equation*}

Thus $\Vert z^*_y \Vert_{Z^*}=\sup\lbrace{ z^*_y(z)\colon  \Vert z\Vert_Z \leq 1  \rbrace} \leq \Vert y\Vert_{\CCA(\RRD)}$.

{\bf Step 4.} To complete the proof we need continuity of the mapping $T$ which is of course equivalent to boundedness. In fact it is easy to see that $T^{-1}y=z^*_y $ is bounded. Estimations in step 3 imply that $\Vert T^{-1} \Vert \leq 1$. By Banach Isomorphism Theorem $\Vert T \Vert \leq C$, what finishes the~proof.
\end{proof}

\section{Acknowledgment}
This work was partially supported by the Polish Government MNiSW: P.G and A.\'{S}-G received support from the National Science Centre, UMO-2015/18/M/ST1/00075; K.\L ~acknowledges the support of from the National Science Centre, DEC-2012/05/E/ST1/02218.
\newpage

\bibliographystyle{alpha}   % this means that the order of references
			    % is dtermined by the order in which the
			    % \cite and \nocite commands appear
  % list here all the bibliographies that
			     % you need. 

\end{document}